\newtheorem{theorem}{Theorem}[section]
\newtheorem{proposition}[theorem]{Proposition}
\newtheorem{corollary}[theorem]{Corollary}
\newtheorem{lemma}[theorem]{Lemma}
\newtheorem{remark}[theorem]{Remark}
\newenvironment{proof}[1][Proof]{\textbf{#1.} }{\ \rule{0.5em}{0.5em}}
\DeclareMathOperator\Var{Var}
\DeclareMathOperator\Cov{Cov}
\begin{document}

 \title{On exact laws of large numbers for Oppenheim expansions with infinite mean}%\thanks{The support of Gruppo Nazionale per l'Analisi Matematica, la Probabilit\`{a} e le loro Applicazioni (GNAMPA) of the Istituto Nazionale di Alta Matematica (INDAM) is  acknowledged.} 
\author{Rita Giuliano \footnote{Dipartimento di
		Matematica, Universit\`a di Pisa, Largo Bruno
		Pontecorvo 5, I-56127 Pisa, Italy (email: rita.giuliano@unipi.it)}~~and Milto Hadjikyriakou\footnote{School of Sciences, University of Central Lancashire, Cyprus campus, 12-14 University Avenue, Pyla, 7080 Larnaka, Cyprus (email:
		mhadjikyriakou@uclan.ac.uk).}} 	
\maketitle	

\begin{abstract}
	
	\noindent
	In this work we investigate the asymptotic behaviour of  weighted partial sums of a particular class of random variables related to Oppenheim series expansions.  More precisely, we verify convergence in probability as well as almost sure convergence to a strictly positive and finite constant without assuming any dependence structure or the existence of means. Results of this kind are known as  {\it exact weak} and {\it exact strong} laws. 
\end{abstract}

\textbf{Keywords}: Oppenheim expansions, exact strong laws, exact weak laws, infinite means.

\section{Introduction}
Consider a sequence $\{X_n, n\geq 1\}$ with independent and identically distributed random variables.  If the random variables have nonzero finite mean, Kolmogorov's strong law of large numbers implies that 
\[
\lim_{n\to\infty}\frac{1}{n\mu}\sum_{k = 1}^{n}X_k= 1\quad\mbox{a.s.}
\]
where $\mu$ denotes the common mean of the random variables. It has been proven that in the case of zero mean or in the case where the mean does not exist, such a strong law is not valid (see for example \cite{M1978} and \cite{CR1961}). However, similar asymptotic results can be obtained in some cases by correctly adjusting the weights involved. Similar peculiar cases can be found in the literature of weak laws. In fact,   it was proven in \cite{F1971} that, for $\{X_n, n\geq 1\}$ independent and identically distributed random variables with $S_n = \sum_{i=1}^{n}X_i$,
\[
\frac{S_n -nEX_1I\{|X_1|\leq n\}}{n}\to 0\quad\mbox{in probability as}\quad n\to \infty
\]
if and only if 
\[
xP(|X_1|>x)\to 0\quad\mbox{as}\quad x\to \infty.
\]
The above result implies that the condition of the existence of means is not necessary for obtaining a weak law of large numbers. Typical examples of this case are the well-known St. Petersburg game described in \cite{F1968} and Feller game presented in \cite{MN2013}. 

Thus, it is important to study {\it weighted} laws of large numbers i.e. to identify sequences of real numbers $(a_n)_{n\geq 1}$ and $(b_n)_{n\geq 1}$ such that $\frac{\sum_{k = 1}^{n}a_k X_k}{b_n}$ converges to 1 either in probability or almost surely. These kind of problems are called \textit{exact weak and exact strong laws of large numbers} respectively. 

The case of exact strong laws has been studied extensively by Adler (see \cite{A2000} and all the references therein), while in \cite{HP1993} and \cite{AM2018} the assumption of independence has been relaxed. Exact weak laws for i.i.d. random variables can be found in \cite{A2007}, \cite{A2008} and \cite{N2016}, while the assumption of identically distributed random variables is dropped in \cite{A2012}. Exact weak laws of large numbers can also be found in the literature for dependent random variables (see for example \cite{MA2018} and \cite{W2019}).

Throughout the paper, the notation $a_n\sim b_n$, $a_n = o(b_n)$ and $f(x)\asymp g(x)$ will be used to denote \[\lim_{n\to \infty}\frac{a_n}{b_n}=1,\quad \lim_{n\to \infty}\frac{a_n}{b_n}=0\quad\mbox{and}\quad 0<\liminf_{x\to 0}\frac{f(x)}{g(x)}\leq \limsup_{x\to 0}\frac{f(x)}{g(x)}<\infty \]  
respectively while the constant $C$ will be used to denote a real number that is not necessarily the same in every appearance. We use the convention $\sum_a^b=0$ if $b<a$, while $\lceil x \rceil$ is used to denote the least integer greater than or equal to $x$. Last, by the symbol $\mathbb{N}^*$ we mean the set of integers $\{1, 2, 3, \dots\}$ and the symbol $I(A)$ denotes the indicator function of the set $A$.

We are interested in obtaining weighted weak and strong laws of large numbers for a particular class of random variables related to Oppenheim expansions. The framework of our work is described below.

Let $(B_n)_{n\geq  1}$ be a sequence of integer valued random variables defined on $(\Omega, \mathcal{A}, P)$, where $\Omega =[0,1]$, $\mathcal{A}$ is the $\sigma$-algebra of the Borel subsets of $[0,1]$ and $P$ is the Lebesgue measure on $[0,1]$. Let $\{F_n, n\geq 1\}$ be a sequence of probability distribution functions defined on $[0,1]$ with  $F_n(0)=0$, $\forall n$ and moreover let $\varphi_n:\mathbb{N}^*\to \mathbb{R}^+$ be a sequence of functions. Furthermore, let $(y_n)_{n\geq  1}$ with $y_n=y_n(h_1, \dots, h_n)$ be a sequence of nonnegative numbers (i.e. possibly depending on the $n$ integers $h_1, \dots, h_n$) such that, for $h_1 \geq  1$ and $h_j\geq  \varphi_{j-1}(h_{j-1})$, $j=2, \dots, n$ we have

\[\label{densitacondizionale}
P\big(B_{n+1}=h_{n+1}|B_{n}=h_{n}, \dots, B_{1}=h_{1}\big)= F_n(\beta_n)-F_n(\alpha_n),
\]
where 
\[
\alpha_n=\delta_n(h_n, h_{n+1}+1, y_n)  ,\quad \beta_n=\delta_n(h_n, h_{n+1}, y_n)\quad\mbox{with}\quad\delta_j(h,k, y) = \frac{ \varphi_j (h )(1+y )}{k+\varphi_j (h ) y  }.
\]
Let $Y_n= y_n(B_1, \dots, B_n)$ and define
\begin{equation}
\label{Rdef}R_{n}= \frac{ B_{n+1}+\varphi_n(B_n) Y_n}{\varphi_n(B_n)(1+Y_n) }= \frac{1}{\delta_n(B_n, B_{n+1}, Y_n)}.
\end{equation}
Particular instances of this scheme are studied in \cite{L1883}, \cite{G1976} (L\"uroth series), \cite{S1974}, \cite{E1913} (Engel series), \cite{P1960} (Sylvester series), \cite{HKS2002}  (Engel continued fraction expansions). Recently, in \cite{G2018} the convergence of
\[
\frac{1}{n\log n}\sum_{k = 1}^{n} R_k
\]
was studied and a weak law of large number was obtained (see Theorem 2.2 there). 

The purpose of the present work is to obtain exact laws for the random variables $(R_n)_{n\geq 1}$, i.e. to find suitable sequences of real numbers $(a_n)_{n\geq 1}$ and $(b_n)_{n\geq 1}$ such that the convergence of
\[
\frac{1}{b_n}\sum_{k = 1}^{n}a_k R_k
\]
to a positive finite number is established either in probability or almost surely. The paper is structured as follows. In Section 2 we present some preliminary results that are instrumental for obtaining the main results of this work. In Section 3 we present some exact weak laws while the last section of the paper is devoted to exact strong laws.

\section{Preliminaries}
First observe that for every $n$ and  for every fixed $h$ and $y$, we have $\delta_n(h ,\varphi (h), y)=1$, hence
\begin{equation}\label{intevallo}
\bigcup_{k\geq \varphi_n (h)}\big[\delta_n(h,k+1,y),\delta_n(h,k,y)\big]= \lim_{k\to \infty}\left[\frac{ \varphi_n (h )(1+y )}{k+1+\varphi_n (h ) y  },1\right]=(0,1],
\end{equation}
so that
\[
\sum_{k\geq \varphi_n (h)} \int_{\delta_n(h,k+1,y)}^{\delta_n(h,k,y)} \,{\rm d  }{F_j}(u)= \int_0^1  \,{\rm d }{F_j}(u)=1.
\]
For every integer $n$, let $U_n$ be a  random variable with distribution $F_n$. Then the characteristic function of $Y_n:= \frac{1}{U_n}$ is
$$\psi_n(t) =\int_0^1 {\rm e}^{{\rm i}\frac{t}{u}}  \,{\rm d  }{F_n}(u).$$
Furthermore, notice that for every $n$ and for every fixed $h$ and $y$, relation (\ref{intevallo}) allows us to write the characteristic of $Y_n$ in the following form 
\[
\psi_n(t) =\sum_{k\geq \varphi_n (h)} \int_{\delta_n(h,k+1,y)}^{\delta_n(h,k,y)} {\rm e}^{{\rm i}\frac{t}{u}}\,{\rm d  }{F_n}(u).
\]
We start by stating two known results that are important tools for obtaining Theorem \ref{teoremadistanza}. Although the original results stated in \cite{G2018}  concern  identical absolutely continuous distributions, the same results are valid even in our more general framework, where   the only assumption needed is the existence of the distribution functions $F_n$. The proofs are omitted for brevity. 

\begin{lemma}[\cite{G2018}, Lemma 4.1] \sl 
	\label{lemmadistanza}
	Let the integer $h$ and the positive number $y$ be fixed. Then, for every $t \in \mathbb{R}$ and for every integer $n \geq 1$,
	$$\left|\sum_{k \geq \varphi_n(h)}{\rm e}^{{\rm i}\frac{t}{\delta_n(h,k,y)}}\int_{\delta_n(h,k+1,y)}^{\delta_n(h,k,y)}{\rm d }{F_n}(x)-\psi_n(t)\right|\leq |t|.$$
\end{lemma}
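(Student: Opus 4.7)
The plan is a direct term-by-term comparison. Using the alternative representation
\[
\psi_n(t) = \sum_{k \ge \varphi_n(h)} \int_{\delta_n(h,k+1,y)}^{\delta_n(h,k,y)} e^{it/u}\, dF_n(u)
\]
recorded immediately before the lemma, I would rewrite the quantity inside the absolute value as a single series of integrals,
\[
\sum_{k \ge \varphi_n(h)} \int_{\delta_n(h,k+1,y)}^{\delta_n(h,k,y)} \bigl( e^{it/\delta_n(h,k,y)} - e^{it/u} \bigr)\, dF_n(u),
\]
and then pull the absolute value inside via the triangle inequality for series and integrals.

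Next I would apply the standard estimate $|e^{ia}-e^{ib}|\le|a-b|$ with $a=t/\delta_n(h,k,y)$ and $b=t/u$. This reduces the whole problem to bounding $|1/\delta_n(h,k,y) - 1/u|$ uniformly for $u$ in the interval $[\delta_n(h,k+1,y), \delta_n(h,k,y)]$. Since $v\mapsto 1/v$ is monotone on $(0,\infty)$, this supremum is attained at the left endpoint and equals the telescoping difference $1/\delta_n(h,k+1,y) - 1/\delta_n(h,k,y)$. The explicit formula
\[
\frac{1}{\delta_n(h,k,y)} = \frac{k + \varphi_n(h)\,y}{\varphi_n(h)(1+y)}
\]
makes this evaluate to exactly $1/(\varphi_n(h)(1+y))$, independently of $k$.

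Finally, pulling that constant outside the series and invoking (\ref{intevallo}) in the form $\sum_k \int_{\delta_n(h,k+1,y)}^{\delta_n(h,k,y)}\,dF_n(u) = 1$, the estimate collapses to $|t|/(\varphi_n(h)(1+y))$. In the Oppenheim framework one has $\varphi_n(h)\ge 1$ and $y>0$, so this quantity is at most $|t|$, which gives the claim. I do not anticipate any real obstacle: the only spot where care is required is the telescoping computation for the reciprocals of $\delta_n$, and the rest is routine bookkeeping with elementary bounds on the complex exponential.
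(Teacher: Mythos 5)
Your argument is correct and is essentially the proof of Lemma 4.1 in the cited reference \cite{G2018} — the present paper omits the proof entirely, deferring to that source — namely: write the difference as a single series of integrals, apply $|{\rm e}^{{\rm i}a}-{\rm e}^{{\rm i}b}|\le|a-b|$, and use the telescoping identity $\frac{1}{\delta_n(h,k+1,y)}-\frac{1}{\delta_n(h,k,y)}=\frac{1}{\varphi_n(h)(1+y)}$ together with $\sum_k\int_{\delta_n(h,k+1,y)}^{\delta_n(h,k,y)}{\rm d}F_n(u)=1$. The only step worth making explicit is the last one: your bound is $|t|/\bigl(\varphi_n(h)(1+y)\bigr)$, so you need $\varphi_n(h)(1+y)\ge 1$; this holds because $y\ge 0$ and because the setup implicitly forces $\varphi_n(h)\ge 1$ (the identity $\delta_n(h,\varphi_n(h),y)=1$ and the covering \eqref{intevallo} require $\varphi_n(h)$ to be an admissible integer value of $k$, and the digits are positive integers), even though the paper's general hypothesis only states $\varphi_n:\mathbb{N}^*\to\mathbb{R}^+$.
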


\noindent The case $n=1$ of  Lemma \ref{lemmadistanza} is isolated for future reference in the corollary that follows.

\begin{corollary}[\cite{G2018}, Corollary 4.2]
	\label{cason=1} Let $\phi_{R_1}$ be the characteristic function of $R_1$. Then, for  every $t \in \mathbb{R}$,
	$$\left|\phi_{R_1}(t)-\psi_1(t)\right|\leq |t|.$$
\end{corollary}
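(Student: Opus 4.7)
The plan is simply to reduce the corollary to the $n=1$ case of Lemma \ref{lemmadistanza} by conditioning on $B_1$. Since $\phi_{R_1}(t)=E[e^{itR_1}]$ and $R_1=1/\delta_1(B_1,B_2,Y_1)$ with $Y_1=y_1(B_1)$, the tower property gives
\[
\phi_{R_1}(t)=E\bigl[E[e^{it/\delta_1(B_1,B_2,Y_1)}\mid B_1]\bigr].
\]
On the event $\{B_1=h_1\}$, the value $Y_1=y_1(h_1)$ is a constant, and the conditional law of $B_2$ given $B_1=h_1$ is concentrated on the integers $k\geq \varphi_1(h_1)$ with masses $F_1(\delta_1(h_1,k,y_1(h_1)))-F_1(\delta_1(h_1,k+1,y_1(h_1)))$. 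Hence
\[
E\bigl[e^{it/\delta_1(B_1,B_2,Y_1)}\mid B_1=h_1\bigr]
=\sum_{k\geq\varphi_1(h_1)}e^{it/\delta_1(h_1,k,y_1(h_1))}\int_{\delta_1(h_1,k+1,y_1(h_1))}^{\delta_1(h_1,k,y_1(h_1))}{\rm d}F_1(x).
\]

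Now I would apply Lemma \ref{lemmadistanza} with $n=1$, $h=h_1$, and $y=y_1(h_1)$. This yields, for every realization $h_1$ of $B_1$,
\[
\bigl|E[e^{itR_1}\mid B_1=h_1]-\psi_1(t)\bigr|\leq |t|,
\]
and, crucially, the bound on the right-hand side is independent of $h_1$. Taking expectation in $B_1$ and using the triangle inequality (or Jensen's inequality applied to the modulus),
\[
\bigl|\phi_{R_1}(t)-\psi_1(t)\bigr|
=\bigl|E\bigl[E[e^{itR_1}\mid B_1]-\psi_1(t)\bigr]\bigr|
\leq E\bigl|E[e^{itR_1}\mid B_1]-\psi_1(t)\bigr|\leq |t|,
\]
which is the desired inequality.

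There is really no substantive obstacle: the entire content of the corollary is already present in Lemma \ref{lemmadistanza}. The only thing to watch is that $Y_1$ becomes deterministic once we condition on $B_1$, so that the hypothesis of the lemma (fixed $h$ and $y$) is satisfied on each atom $\{B_1=h_1\}$, and that the uniform-in-$h_1$ bound $|t|$ survives integration against the marginal law of $B_1$ (whatever it may be).
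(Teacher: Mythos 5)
Your proof is correct and is essentially the paper's own route: the paper omits the proof (citing \cite{G2018}) and simply remarks that the corollary is the case $n=1$ of Lemma \ref{lemmadistanza}, and your conditioning on $B_1$ --- under which $Y_1$ becomes the fixed constant $y_1(h_1)$ so the lemma applies on each atom, with a bound uniform in $h_1$ --- is exactly how that specialization is made rigorous (and is the same decomposition used in the induction step of Theorem \ref{teoremadistanza}).
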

Lemma \ref{lemmadistanza} and Corollary \ref{cason=1} are instrumental for obtaining the result that follows. 

\begin{theorem}\sl \label{teoremadistanza}  Let $(R_n)_{n\geq 1}$ be as in (\ref{Rdef}) and let $U_1,\ldots, U_n$ be independent random variables such that $U_n\sim F_n$ for any integer $n$. Let $\phi_{R_1, \dots, R_n}$ be the characteristic function of the vector $(R_1, \dots, R_n)$ and let $\psi_n$ be the characteristic function of the random variable defined as $Y_n = U_n^{-1}$ for every $n$. Then, for every $  (t_1, \dots, t_n)\in \mathbb{R}^n$ and $n \geq 1$ we have
	$$\left| \phi_{R_1, \dots, R_n}(t_1, \dots, t_n)-\prod_{k=1}^n\psi_k(t_k)\right|\leq  \sum_{k=1}^n |t_k|.$$
\end{theorem}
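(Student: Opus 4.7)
The plan is to prove the estimate by induction on $n$. The base case $n = 1$ is exactly Corollary \ref{cason=1}, which gives $|\phi_{R_1}(t_1) - \psi_1(t_1)| \leq |t_1|$. For the induction step, the key preliminary observation is that, since $R_k = 1/\delta_k(B_k, B_{k+1}, Y_k)$ with $Y_k = y_k(B_1, \dots, B_k)$, each $R_k$ is $\sigma(B_1, \dots, B_{k+1})$-measurable; in particular $R_1, \dots, R_{n-1}$ are all measurable with respect to $\sigma(B_1, \dots, B_n)$, while $R_n$ depends on the history only through $B_n$ and $Y_n$ together with the additional coordinate $B_{n+1}$.

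This structure suggests conditioning on $\sigma(B_1, \dots, B_n)$ and writing
$$\phi_{R_1, \dots, R_n}(t_1, \dots, t_n) = E\Bigl[e^{i \sum_{k=1}^{n-1} t_k R_k}\, G_n(B_1, \dots, B_n)\Bigr],$$
where $G_n(h_1, \dots, h_n) := E\bigl[e^{i t_n R_n} \,\big|\, B_1 = h_1, \dots, B_n = h_n\bigr]$. Given the history $(h_1, \dots, h_n)$ one has $R_n = 1/\delta_n(h_n, B_{n+1}, y_n)$, and $B_{n+1}$ takes the value $k \geq \varphi_n(h_n)$ with probability $F_n(\delta_n(h_n, k, y_n)) - F_n(\delta_n(h_n, k+1, y_n))$, so $G_n(h_1, \dots, h_n)$ is precisely the finite sum appearing on the left-hand side of Lemma \ref{lemmadistanza}. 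That lemma therefore gives $|G_n(B_1, \dots, B_n) - \psi_n(t_n)| \leq |t_n|$ almost surely.

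Having extracted this pointwise bound, I close the induction with the triangle inequality
$$\left| \phi_{R_1, \dots, R_n} - \prod_{k=1}^n \psi_k(t_k)\right| \leq \left|\phi_{R_1, \dots, R_n} - \psi_n(t_n)\,\phi_{R_1, \dots, R_{n-1}}\right| + |\psi_n(t_n)| \left|\phi_{R_1, \dots, R_{n-1}} - \prod_{k=1}^{n-1} \psi_k(t_k)\right|.$$
The first summand equals $\bigl|E[e^{i\sum_{k=1}^{n-1} t_k R_k}(G_n - \psi_n(t_n))]\bigr| \leq |t_n|$, since the exponential has modulus one and $|G_n - \psi_n(t_n)|\leq |t_n|$; the second summand is at most $\sum_{k=1}^{n-1}|t_k|$ by the induction hypothesis combined with $|\psi_n(t_n)| \leq 1$. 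Adding the two contributions gives $\sum_{k=1}^n |t_k|$, as required. The only delicate point I expect is bookkeeping: verifying that the conditional distribution of $B_{n+1}$ given the past, together with the formula for $R_n$, align exactly with the indexing in Lemma \ref{lemmadistanza} so that its conclusion can be invoked almost surely on $(B_1, \dots, B_n)$. Once this alignment is confirmed, the inductive step is immediate.
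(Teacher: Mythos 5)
Your argument is correct and is essentially the paper's own proof: the paper performs the same induction, with your conditional expectation $G_n$ appearing there as the explicit inner sum $\sum_{h_{n+1}\geq\varphi_n(h_n)}\bigl(F_n(\beta_n)-F_n(\alpha_n)\bigr)e^{it_n r_n}$, bounded by Lemma \ref{lemmadistanza}, and the same add-and-subtract of $\psi_n(t_n)$ times the $(n-1)$-dimensional characteristic function yielding the recursion $\Delta_n\leq|t_n|+\Delta_{n-1}$. The ``alignment'' you flag does hold, since $P(B_{n+1}=k\mid\mathcal{B}_n)=\int_{\delta_n(h_n,k+1,y_n)}^{\delta_n(h_n,k,y_n)}\,{\rm d}F_n$ and $e^{it_nr_n}=e^{it_n/\delta_n(h_n,k,y_n)}$, which is exactly the sum in the lemma.
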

\begin{proof}
	By Corollary \ref{cason=1}, it suffices to show that, for every $n \geq 1$, we have 
	\begin{equation}\label{formaalternativa1}
	\left| \phi_{R_1, \dots, R_n}(t_1, \dots, t_n)-\prod_{k=1}^n\psi_k(t_k)\right|\leq \sum_{k=2}^n |t_k| +\Big|\phi_{R_1}(t_1)-\psi_1(t_1)\Big|.
	\end{equation}
	With the case $n=1$ being obvious, we can assume $n \geq 2$.
	For simplicity, let $y_k:= y_k(h_1, \dots, h_k)$ and
	\begin{equation}\label{posizioni}
	r_k:= r_{k}(h_1, \dots, h_{k+1})= \frac{ h_{k+1}+\varphi_k(h_k)y_k(h_1, \dots, h_k)}{\varphi_k(h_k)(1+y_k(h_1, \dots, h_k))}
	=\frac{1}{\delta_k(h_k, h_{k+1}, y_k)}.
	\end{equation}
	First we write the characteristic function $\phi_{R_1, \dots, R_n}$ in a suitable form. Note that the subscript $R_1, \dots, R_n$ is eliminated for simplicity. For every $n \geq 2$ put
	$$\mathcal{E}_n:=\big\{  (h_1, \dots, h_n )\in {N}^*:h_1\geq 1, h_i\geq \varphi_{i-1}(h_{i-1})\hbox{ \rm for every } i=2, \dots, n \big\}$$
and let 
$$\mathcal{B}_n := \{B_1=h_1, \dots, B_{n}=h_{n}\}.$$
	Then
	\begin{align*} &
	\phi(t_1, \dots, t_n) =E\left[{\rm e}^{{\rm i} \sum_{k=1}^n t_k R_k }\right]= \sum_{(h_1, \dots, h_{n+1} )\in\mathcal{E}_{n+1}}P\big(\mathcal{B}_{n+1}\big){\rm e}^{{\rm i} \sum_{k=1}^nt_kr_{k}}\\
	&=
	\sum_{(h_1, \dots, h_{n+1} )\in\mathcal{E}_{n+1}}P\big(  B_{n+1}=h_{n+1}|\mathcal{B}_n\big)P\big( \mathcal{B}_n\big){\rm e}^{{\rm i} \sum_{k=1}^nt_{k} r_{k}}\\
	&
	= \sum_{(h_1, \dots, h_{n+1} )\in\mathcal{E}_{n+1}} \left\{ \big(F_n(\beta_n) - F_n (\alpha_n)\big)  {\rm e}^{{\rm i}t_nr_{n}}\right\}P\big(  \mathcal{B}_n\big){\rm e}^{{\rm i} \sum_{k=1}^{n-1}t_kr_{k}}\\
	&
	=\sum_{(h_1, \dots, h_{n} )\in\mathcal{E}_{n}}P\big(  \mathcal{B}_n\big){\rm e}^{{\rm i} \sum_{k=1}^{n-1}t_kr_{k}}\left\{\sum_{\varphi_n(h_n)\leq h_{n+1}}\big( F_n(\beta_n) - F_n (\alpha_n)\big){\rm e}^{{\rm i}t_nr_{n}}\right\}.
	\end{align*}	
	Thus,
	\begin{align*} &
	\phi(t_1, \dots, t_n) -\prod_{k=1}^n\psi_k(t_k) \\
	&
	=\sum_{(h_1, \dots, h_{n} )\in\mathcal{E}_{n}}P\big(  \mathcal{B}_n\big){\rm e}^{{\rm i} \sum_{k=1}^{n-1}t_kr_{k}}\left\{\sum_{\varphi(h_n) \leq h_{n+1}}\big(F_n(\beta_n) - F_n (\alpha_n)  \big){\rm e}^{{\rm i}t_nr_{n}}-\psi_n (t_n)\right\}+\\
	&~~~~+\psi_n(t_n)\left\{\sum_{(h_1, \dots, h_{n} )\in\mathcal{E}_{n}} P\big( \mathcal{B}_n\big)
	{\rm e}^{{\rm i} \sum_{k=1}^{n-1}t_kr_{k}}- \prod_{k=1}^{n-1}\psi_k(t_k)\right\}\\
	&
	=\sum_{(h_1, \dots, h_{n} )\in\mathcal{E}_{n}}P\big( \mathcal{B}_n\big){\rm e}^{{\rm i} \sum_{k=1}^{n-1}t_{k}r_{k}}\left\{\sum_{\varphi_n(h_n) \leq h_{n+1}}\big( F_n(\beta_n)-F_n(\alpha_n)\big){\rm e}^{{\rm i}\frac{t_n}{\delta_n(h_n, h_{n+1}, y_n)}}-\psi_n(t_n)\right\}\\
	&~~~~+\psi_n(t_n)\left\{\sum_{(h_1, \dots, h_{n} )\in\mathcal{E}_{n}} P\big(  \mathcal{B}_n\big)
	{\rm e}^{{\rm i} \sum_{k=1}^{n-1}t_kr_{k}}- \prod_{k=1}^{n-1}  \psi_k(t_k)\right\},
	\end{align*}
	by the last equation in \eqref{posizioni}. Setting
	$$\Delta_n( t_1, \dots, t_n) = \left|\phi(t_1, \dots, t_n) -\prod_{k=1}^n\psi_k(t_k)  \right|,$$
	and using Lemma \ref{lemmadistanza} we have that 
	\begin{eqnarray*} 
	\Delta_n(t)&\leq& \sum_{(h_1, \dots, h_{n} )\in\mathcal{E}_{n}}P\big(  \mathcal{B}_n\big)\left|{\rm e}^{{\rm i} \sum_{k=1}^{n-1}t_kr_{k}}\right|\left| \sum_{\varphi_n(h_n) \leq h_{n+1}}\left(\int_{\alpha_n}^{\beta_n}{\rm d} {F_n} (u) \right){\rm e}^{{\rm i}\frac{t_n}{\delta(h_n, h_{n+1}, y_n)}} -\psi_n(t_n) \right|\\
	&~~~~+& \big|\psi_n(t_n)\big|\Delta_{n-1}(t_1, \dots, t_{n-1})\\
	&\leq&
	|t_n|  \sum_{(h_1, \dots, h_{n} )\in\mathcal{E}_{n}}P\big(  \mathcal{B}_n\big)+ \Delta_{n-1}(t_1, \dots, t_{n-1})\\
	&=&|t_n| +\Delta_{n-1}(t_1, \dots, t_{n-1}).
	\end{eqnarray*}
	Statement \eqref{formaalternativa1} follows immediately by induction.
	\end{proof}
	
	\begin{remark}
		Theorem \ref{teoremadistanza} can be considered as a generalization of Lemma 4.1 in \cite{G2018}.
	\end{remark}
	The results that follow allow us to provide upper and lower bounds for the quantities $P(R_i>x)$ and $P(R_i>x,R_j>y)$ for $x,y\geq 1$.
	 
	\begin{lemma} 
		\label{udist}Let $(R_n)_{n\geq 1}$ be as  in \eqref{Rdef}. Then, for any integer $n$ and for $x\geq 1$,
		\[
		E\left[F_n\left(\frac{\varphi_n(B_n)(1+Y_n)}{x\varphi_n(B_n)(1+Y_n)+1  }\right)\right] \leq P(R_n>x)\leq F_n\left(\frac{1}{x}\right).
		\]
	\end{lemma}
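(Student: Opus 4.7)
The plan is to use the conditional distribution of $B_{n+1}$ given $B_1,\dots,B_n$, together with the observation that the event $\{R_n > x\}$ translates into a tail event for $B_{n+1}$, after which the conditional probability telescopes.

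First, I would rewrite the event. Since $R_n = 1/\delta_n(B_n,B_{n+1},Y_n)$ and $k \mapsto \delta_n(h,k,y) = \varphi_n(h)(1+y)/(k+\varphi_n(h)y)$ is strictly decreasing in $k$, we have $\{R_n > x\} = \{\delta_n(B_n,B_{n+1},Y_n) < 1/x\} = \{B_{n+1} \geq K_n\}$, where, conditionally on $B_1=h_1,\dots,B_n=h_n$,
$$K_n = K_n(h_1,\dots,h_n,x) = \min\bigl\{k \geq \varphi_n(h_n) : \delta_n(h_n,k,y_n) < 1/x\bigr\}.$$
(The set is nonempty since $\delta_n(h_n,k,y_n)\to 0$ as $k\to\infty$; note also $x\ge 1$ ensures $K_n\geq \varphi_n(h_n)$.) Using the conditional law of $B_{n+1}$ and the fact that the intervals $[\delta_n(h_n,k+1,y_n),\delta_n(h_n,k,y_n)]$ telescope to $(0,1]$ as in \eqref{intevallo}, the conditional probability becomes
$$P(R_n > x \mid B_1=h_1,\dots,B_n=h_n) = \sum_{k \geq K_n}\bigl[F_n(\delta_n(h_n,k,y_n)) - F_n(\delta_n(h_n,k+1,y_n))\bigr] = F_n\bigl(\delta_n(h_n,K_n,y_n)\bigr),$$
using $F_n(0)=0$ together with the telescoping. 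Taking expectation yields $P(R_n>x) = E\bigl[F_n(\delta_n(B_n,K_n,Y_n))\bigr]$.

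For the upper bound, the defining property of $K_n$ gives $\delta_n(B_n,K_n,Y_n) < 1/x$ pointwise, and monotonicity of the distribution function $F_n$ produces $F_n(\delta_n(B_n,K_n,Y_n)) \leq F_n(1/x)$. Taking expectation yields $P(R_n>x)\leq F_n(1/x)$.

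For the lower bound I need to bound $\delta_n(B_n,K_n,Y_n)$ from below. The inequality $\delta_n(h_n,k,y_n)<1/x$ is equivalent to $k > x\varphi_n(h_n)(1+y_n) - \varphi_n(h_n)y_n$, so by minimality,
$$K_n \leq x\,\varphi_n(h_n)(1+y_n) - \varphi_n(h_n)y_n + 1.$$
Substituting into $\delta_n(h_n,K_n,y_n) = \varphi_n(h_n)(1+y_n)/(K_n+\varphi_n(h_n)y_n)$ gives
$$\delta_n(B_n,K_n,Y_n) \geq \frac{\varphi_n(B_n)(1+Y_n)}{x\,\varphi_n(B_n)(1+Y_n)+1},$$
and applying the nondecreasing $F_n$ and taking expectations yields the claimed lower bound.

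The only mildly delicate point is the telescoping identification of the conditional probability as $F_n(\delta_n(B_n,K_n,Y_n))$; once that is in hand, both bounds reduce to monotonicity of $F_n$ and the algebraic estimate $K_n \le x\varphi_n(B_n)(1+Y_n)-\varphi_n(B_n)Y_n+1$ coming from the ``$+1$'' in passing to the next integer.
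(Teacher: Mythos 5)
Your proof is correct and takes essentially the same route as the paper's: you identify $\{R_n>x\}$ with the tail event $\{B_{n+1}\geq K_n\}$, telescope the conditional probabilities to $F_n\bigl(\delta_n(B_n,K_n,Y_n)\bigr)$, and then bound the argument of $F_n$ from above by $1/x$ and from below by $\varphi_n(B_n)(1+Y_n)/(x\varphi_n(B_n)(1+Y_n)+1)$. Your $K_n$ is just the paper's explicit threshold $s_n=\lceil x\varphi_n(h_n)+(x-1)y_n\varphi_n(h_n)\rceil$ described by its minimality property, so the two arguments coincide step for step.
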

	
\begin{proof}
	Notice first that since $B_{n+1} \geq \varphi_n(B_n)$ we have that $R_n \geq 1$. We start with the calculation of $P(R_n > x)$, $x \geq 1$. By definition we can write, 
	$$P\big(R_n > x\big)= \sum_{(h_1, \dots, h_{n+1})\in \mathcal{E}_{n+1}}P\big(\mathcal{B}_{n+1} \big)I(r_n>x),$$ 
	where $r_n$ is as defined in \eqref{posizioni} and $\mathcal{B}_n := \{B_1=h_1, \dots, B_{n}=h_{n}\}$. Hence, the RHS of the latter expression can be written as
	\begin{eqnarray*}
	\sum_{(h_1, \dots, h_{n+1})\in \mathcal{E}_{n+1}}P\big(\mathcal{B}_{n+1} \big)I(r_n>x)
	&=&\sum_{(h_1, \dots, h_{n+1})\in \mathcal{E}_{n+1}}P\big(\mathcal{B}_n \big)P \big(B_{n+1}=h_{n+1}|\mathcal{B}_n \big )I(r_n>x)\\
	&=&\sum_{(h_1, \dots, h_{n})\in \mathcal{E}_{n}}P\big(\mathcal{B}_n \big)\sum_{h_{n+1} \geq \varphi_n(h_n)}P \big(B_{n+1}=h_{n+1}|\mathcal{B}_n \big )I(r_n>x)\\
	&=&\sum_{(h_1, \dots, h_{n})\in \mathcal{E}_{n}}P\big(\mathcal{B}_n \big)\sum_{h_{n+1} \geq \varphi_n(h_n)}\big\{F_n(\beta_n)-F_n(\alpha_n)\big\}I(r_n>x)\\
	&=&\sum_{(h_1, \dots, h_{n})\in \mathcal{E}_{n}}P\big(\mathcal{B}_n \big)\sum_{h_{n+1} \geq \varphi_n(h_n)\atop r_n > x}\big\{F_n(\beta_n)-F_n(\alpha_n)\big\}.
	\end{eqnarray*}
	Now $r_n > x$ if and only if
	$$h_{n+1}+\varphi_n(h_n)y_n > x\varphi_n(h_n)+ x y_n \varphi_n(h_n),$$
	or equivalently
	$$h_{n+1}> x\varphi_n(h_n)+ (x-1)y_n \varphi_n(h_n).$$
	Since
	$$x\varphi_n(h_n)+ (x-1)y_n \varphi_n(h_n)\geq x\varphi_n(h_n)\geq \varphi_n(h_n), $$
	the conditions under the inner sum become
	$$h_{n+1}> x\varphi_n(h_n)+ (x-1)y_n \varphi_n(h_n),$$
	or equivalently
	$$h_{n+1}\geq \lceil x\varphi_n(h_n)+ (x-1)y_n \varphi_n(h_n)\rceil=: s_n(x;h_1, \dots, h_{n} ).$$
	Hence,
	\begin{eqnarray}
\nonumber	P\big(R_n > x\big)&=& \sum_{ (h_1, \dots, h_{n})\in \mathcal{E}_{n}}P\big(\mathcal{B}_n \big)\sum_{h_{n+1} \geq s_n(x;h_1, \dots, h_{n} )  }\big\{F_n(\beta_n)-F_n(\alpha_n)\big\}\\
\nonumber &=& \sum_{ (h_1, \dots, h_{n})\in \mathcal{E}_{n}}P\big(\mathcal{B}_n \big)F_n\left(\frac{\varphi_n(h_n)(1+y_n)}{s_n(x;h_1, \dots, h_n) + \varphi_n(h_n) y_n}\right)
\\
&=&\label{eq1}E\left[  F_n\left(\frac{\varphi_n(B_n)(1+Y_n)}{S_n(x;B_1, \dots, B_n) + \varphi_n(B_n) Y_n}\right)\right ].
\end{eqnarray}
Notice that
\[
S_n(x;B_1, \dots, B_n) + \varphi_n(B_n) Y_n=\lceil x\varphi_n(B_n)+ (x-1)Y_n \varphi_n(B_n)\rceil+ \varphi_n(B_n) Y_n,
\]
so
\begin{align}
&\nonumber x\varphi_n(B_n)(1+Y_n)  =  x\varphi_n(B_n)+ (x-1)Y_n \varphi_n(B_n) + \varphi_n(B_n) Y_n \\&\leq\nonumber \lceil x\varphi_n(B_n)+ (x-1)Y_n \varphi_n(B_n)\rceil+ \varphi_n(B_n) Y_n\\ 
&\label{eq2}=S_n(x;B_1, \dots, B_n) + \varphi_n(B_n) Y_n,
\end{align}
and 
\begin{align}
&\nonumber S_n(x;B_1, \dots, B_n) + \varphi_n(B_n) Y_n=\lceil x\varphi_n(B_n)+ (x-1)Y_n \varphi_n(B_n)\rceil+ \varphi_n(B_n) Y_n\\
&\label{eq3}\leq   x\varphi_n(B_n)+ (x-1)Y_n \varphi_n(B_n)+1+ \varphi_n(B_n) Y_n =x\varphi_n(B_n)(1+Y_n)+1.
\end{align}
The result follows by combining (\ref{eq1})--(\ref{eq3}).	
\end{proof}

\noindent The bivariate extension of  Lemma \ref{udist} is presented in the result  that follows. The proof can be easily obtained by applying similar steps as in the proof of Lemma \ref{udist}  and therefore is omitted. 

\begin{lemma} 
	\label{bdist}Let $(R_n)_{n\geq 1}$ be as in (\ref{Rdef}). Then for $x,y\geq 1$ and for integers $i<j$,
	\[
	E\left[I(R_i\geq x)F_j\left(\frac{\varphi_{j}(B_{j})(1+Y_{j})}{S_{j}(y;B_1, \dots, B_{j}) + \varphi_{j}(B_{j}) Y_{j}}\right)\right]= P(R_i>x,R_j>y)\leq F_i\left(\frac{1}{x}\right)F_j\left(\frac{1}{y}\right)
	\]
	where $s_n(x;h_1, \dots, h_{n} ) := \lceil x\varphi_n(h_n)+ (x-1)y_n \varphi_n(h_n)\rceil $.
\end{lemma}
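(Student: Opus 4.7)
The plan is to mirror the proof of Lemma \ref{udist}, exploiting the fact that since $i<j$, the event $\{R_i>x\}$ depends only on $B_1,\ldots,B_{i+1}$ and hence is unaffected by the innermost summation over $h_{j+1}$. First I would expand
\[
P(R_i>x,R_j>y)=\sum_{(h_1,\ldots,h_{j+1})\in\mathcal{E}_{j+1}} P(\mathcal{B}_{j+1})\,I(r_i>x)\,I(r_j>y),
\]
factor $P(\mathcal{B}_{j+1})=P(B_{j+1}=h_{j+1}\mid\mathcal{B}_j)P(\mathcal{B}_j)$, and isolate the inner sum over $h_{j+1}\ge\varphi_j(h_j)$. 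Since $I(r_i>x)$ does not involve $h_{j+1}$, it pulls out of this inner sum.

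What remains inside is exactly the one-variable computation carried out in the proof of Lemma \ref{udist}: the constraint $r_j>y$ is equivalent to $h_{j+1}\ge s_j(y;h_1,\ldots,h_j)$, and the telescoping series
\[
\sum_{h_{j+1}\ge s_j(y;h_1,\ldots,h_j)}\bigl(F_j(\beta_j)-F_j(\alpha_j)\bigr)
\]
collapses to $F_j\!\bigl(\varphi_j(h_j)(1+y_j)/(s_j(y;h_1,\ldots,h_j)+\varphi_j(h_j)y_j)\bigr)$. Reinterpreting the surviving outer sum over $(h_1,\ldots,h_j)\in\mathcal{E}_j$ as an expectation with respect to $(B_1,\ldots,B_j)$ produces the claimed equality.

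For the upper bound, I would reuse the inequality already proved in \eqref{eq2} with $y$ in place of $x$, namely
\[
\frac{\varphi_j(B_j)(1+Y_j)}{S_j(y;B_1,\ldots,B_j)+\varphi_j(B_j)Y_j}\le \frac{1}{y}.
\]
Monotonicity of $F_j$ then gives $P(R_i>x,R_j>y)\le F_j(1/y)\,P(R_i>x)$, and invoking Lemma \ref{udist} once more bounds $P(R_i>x)$ by $F_i(1/x)$. The only detail deserving care is the observation that $r_i$ is a function of $(h_1,\ldots,h_{i+1})$ alone, which is immediate from \eqref{posizioni} together with $i<j$; beyond this bookkeeping I do not anticipate any genuine obstacle, since every nontrivial step is already in place from Lemma \ref{udist}.
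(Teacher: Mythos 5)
Your proposal is correct and is exactly the argument the paper has in mind: the paper omits this proof, stating only that it ``can be easily obtained by applying similar steps as in the proof of Lemma \ref{udist},'' and your writeup supplies precisely those steps (pulling $I(r_i>x)$, a function of $h_1,\dots,h_{i+1}$ with $i+1\le j$, out of the inner sum over $h_{j+1}$, collapsing the telescoping series, and then combining the bound \eqref{eq2} with the univariate upper bound). No gaps.
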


\noindent Some algebraic calculations lead to simpler and useful inequalities.  

\begin{corollary}\label{boundsforprob} Let $(R_n)_{n\geq 1}$ be as in (\ref{Rdef}). Then, for $x,y\geq 1$
	\begin{enumerate}
		\item[i.] 
		$$E\left[F_i\left(\frac{1}{x+A_i}\right)\right]\leq P(R_i>x)\leq F_{i}\left(\frac{1}{x}\right)\quad\mbox{for}\quad i=1,2,\ldots.$$
		
		\item[ii.] 
		$$E\left[F_j\left(\frac{1}{y+A_j}\right)I(R_i>x)\right] \leq P(R_i>x, R_j>y)\leq F_{i}\left(\frac{1}{x}\right) F_{j}\left(\frac{1}{y}\right)\quad\mbox{for}\quad i<j.$$
	\end{enumerate}
	where $A_j = (\varphi_j(B_j)(1+Y_j))^{-1}$ for $j=1,2,\ldots$.
\end{corollary}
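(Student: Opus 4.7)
The plan is to observe that this is essentially an algebraic reformulation of Lemmas \ref{udist} and \ref{bdist}, where the key simplification is the identity
$$\frac{\varphi_n(B_n)(1+Y_n)}{x\varphi_n(B_n)(1+Y_n)+1}=\frac{1}{x+A_n},$$
obtained by dividing numerator and denominator by $\varphi_n(B_n)(1+Y_n)$. Both upper bounds in (i) and (ii) are stated verbatim in Lemmas \ref{udist} and \ref{bdist}, so only the lower bounds require work, and even then only a short computation.

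For the lower bound in (i), I would start from the lower bound in Lemma \ref{udist},
$$P(R_i>x)\geq E\left[F_i\!\left(\frac{\varphi_i(B_i)(1+Y_i)}{x\varphi_i(B_i)(1+Y_i)+1}\right)\right],$$
and apply the identity above (with $n=i$) to rewrite the argument of $F_i$ as $\frac{1}{x+A_i}$. This immediately yields the claimed inequality.

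For the lower bound in (ii), I would combine the equality part of Lemma \ref{bdist} with the bound \eqref{eq3}, which gives
$$S_j(y;B_1,\dots,B_j)+\varphi_j(B_j)Y_j \leq y\,\varphi_j(B_j)(1+Y_j)+1.$$
Since $F_j$ is nondecreasing and the quantity $\varphi_j(B_j)(1+Y_j)$ inside the numerator is positive, dividing gives
$$\frac{\varphi_j(B_j)(1+Y_j)}{S_j(y;B_1,\dots,B_j)+\varphi_j(B_j)Y_j}\geq \frac{\varphi_j(B_j)(1+Y_j)}{y\varphi_j(B_j)(1+Y_j)+1}=\frac{1}{y+A_j},$$
so applying $F_j$ to both sides and multiplying by the nonnegative indicator $I(R_i>x)$ inside the expectation produces the stated lower bound.

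The main (and essentially only) obstacle is noticing the algebraic simplification that matches the form in which $A_j$ was defined; once the identity is in hand, both lower bounds follow by monotonicity of $F_i, F_j$ and the bounds already recorded in the proof of Lemma \ref{udist}. No new probabilistic argument is required, which is why the authors can present the corollary as a direct consequence of the preceding lemmas.
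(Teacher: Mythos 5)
Your proposal is correct and matches the paper's (one-line) proof, which simply asserts the corollary is straightforward from Lemmas \ref{udist} and \ref{bdist}; you have supplied exactly the intended algebraic identity $\frac{\varphi_n(B_n)(1+Y_n)}{x\varphi_n(B_n)(1+Y_n)+1}=\frac{1}{x+A_n}$ and the monotonicity argument via \eqref{eq3}. No gaps.
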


\begin{proof}
	The proof is straightforward from Lemmas \ref{udist} and \ref{bdist}. 
\end{proof}

\noindent The probability inequalities described above can be simplified further  if the functions $\varphi_n$ satisfy additional conditions. 

\begin{corollary}\label{boundsforprob1} Let $(R_n)_{n\geq 1}$ be as in (\ref{Rdef}) and assume that $\varphi_n \geq 1$ for every $n$. Then for $x,y\geq 1$,
	\begin{enumerate}
		\item[i.]
		$$ F_i\left(\frac{1}{x+1}\right)\leq P(R_i>x)\leq F_{i}\left(\frac{1}{x}\right)\quad\mbox{for}\quad i=1,2,\ldots.$$
		
		\item[ii.]  
		$$F_i\left(\frac{1}{x+1}\right)F_j\left(\frac{1}{y+1}\right)\leq P(R_i>x, R_j>y)\leq F_{i}\left(\frac{1}{x}\right) F_{j}\left(\frac{1}{y}\right)\quad\mbox{for}\quad i<j.$$
	\end{enumerate}
\end{corollary}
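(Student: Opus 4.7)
The plan is to deduce both inequalities directly from Corollary \ref{boundsforprob} by using the assumption $\varphi_n \geq 1$ to bound the random quantity $A_n = (\varphi_n(B_n)(1+Y_n))^{-1}$ by a deterministic constant. Specifically, since $\varphi_n(B_n) \geq 1$ and $Y_n \geq 0$, I have $\varphi_n(B_n)(1+Y_n) \geq 1$, hence $A_n \leq 1$ almost surely.

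For part i, the upper bound is already the upper bound in Corollary \ref{boundsforprob}(i), so nothing is needed there. For the lower bound, the inequality $A_i \leq 1$ yields $\tfrac{1}{x+A_i} \geq \tfrac{1}{x+1}$, and the monotonicity of the distribution function $F_i$ gives $F_i\!\left(\tfrac{1}{x+A_i}\right) \geq F_i\!\left(\tfrac{1}{x+1}\right)$ almost surely. Taking expectations (noting that the right-hand side is deterministic) and applying Corollary \ref{boundsforprob}(i) produces
\[
P(R_i > x) \;\geq\; E\!\left[F_i\!\left(\tfrac{1}{x+A_i}\right)\right] \;\geq\; F_i\!\left(\tfrac{1}{x+1}\right).
\]

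For part ii, the upper bound is again immediate from Corollary \ref{boundsforprob}(ii). For the lower bound, the same monotonicity argument applied to $F_j$ yields $F_j\!\left(\tfrac{1}{y+A_j}\right) \geq F_j\!\left(\tfrac{1}{y+1}\right)$ almost surely, so that
\[
E\!\left[F_j\!\left(\tfrac{1}{y+A_j}\right) I(R_i > x)\right] \;\geq\; F_j\!\left(\tfrac{1}{y+1}\right) P(R_i > x),
\]
and combining this with the lower bound from part i (applied to index $i$) finishes the argument.

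There is no real obstacle here; the only point worth stating carefully is that $A_j \leq 1$ a.s., which replaces the random bound $A_j$ in the inner expectation by the constant $1$ after invoking monotonicity of $F_j$. The whole proof is a short two-step chain from Corollary \ref{boundsforprob}.
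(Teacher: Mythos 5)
Your proof is correct and follows exactly the route the paper intends: it deduces the bounds from Corollary \ref{boundsforprob} by observing that $\varphi_j(B_j)\geq 1$ and $Y_j\geq 0$ force $0\leq A_j\leq 1$, and then uses monotonicity of $F_i$, $F_j$ together with part i to finish part ii. The paper states this as immediate; you have simply written out the same argument in full.
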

\begin{proof}
	The result follows immediately from Corollary \ref{boundsforprob} by noticing that for the quantity $A_j$ we have that $0\leq A_j\leq 1$ for $j=1,2,\ldots$.
\end{proof}

\begin{proposition}
	\label{dependence}Let $(R_n)_{n\geq 1}$ be as in (\ref{Rdef}) with $\varphi_n\geq 1$  for every $n$.  Assume that there exists $M<\infty$ such that $\forall \, j=1,2,\dots$   
	\begin{equation}
	\label{finitesup}F_j(x)-F_j(y)\leq M(x-y) \quad\mbox{for}\quad x>y.
	\end{equation}
	Then for $i\neq j$ and $x,y\geq 1$ we have
	\[
	|P(R_i>x, R_j>y)-P(R_i>x)P(R_j>y)|\leq M\left[F_i\left(\frac{1}{x}\right)\frac{1}{y^2}+ F_j\left(\frac{1}{y}\right)\frac{1}{x^2}\right]
	\]	 
\end{proposition}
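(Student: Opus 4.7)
The plan is to leverage the sandwich estimates already obtained in Corollary \ref{boundsforprob1}, since the Lipschitz hypothesis (\ref{finitesup}) will make the gap between the upper and lower bounds there small. First I would observe that the statement is symmetric under the exchange $(i,x)\leftrightarrow (j,y)$, so we may assume $i<j$ and invoke Corollary \ref{boundsforprob1} directly.

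Set $L_x := F_i\!\left(\tfrac{1}{x+1}\right)$, $U_x := F_i\!\left(\tfrac{1}{x}\right)$, $L_y := F_j\!\left(\tfrac{1}{y+1}\right)$, $U_y := F_j\!\left(\tfrac{1}{y}\right)$. Corollary \ref{boundsforprob1} gives
\[
L_x \leq P(R_i>x) \leq U_x,\qquad L_y \leq P(R_j>y) \leq U_y,\qquad L_xL_y \leq P(R_i>x,R_j>y) \leq U_xU_y.
\]
Since $P(R_i>x)P(R_j>y)$ is also trapped in $[L_xL_y,U_xU_y]$, both $P(R_i>x,R_j>y)$ and $P(R_i>x)P(R_j>y)$ lie in the same interval, so their difference satisfies
\[
\bigl|P(R_i>x,R_j>y)-P(R_i>x)P(R_j>y)\bigr| \leq U_xU_y-L_xL_y.
\]

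The final step is a telescoping followed by Lipschitz. Writing
\[
U_xU_y-L_xL_y = (U_x-L_x)U_y + L_x(U_y-L_y),
\]
the hypothesis (\ref{finitesup}) with $x_0:=1/x$ and $y_0:=1/(x+1)$ gives
\[
U_x-L_x = F_i\!\left(\tfrac{1}{x}\right)-F_i\!\left(\tfrac{1}{x+1}\right) \leq M\!\left(\tfrac{1}{x}-\tfrac{1}{x+1}\right) = \frac{M}{x(x+1)} \leq \frac{M}{x^2},
\]
and analogously $U_y-L_y \leq M/y^2$. Using $L_x \leq U_x = F_i(1/x)$ and $U_y = F_j(1/y)$, one concludes
\[
U_xU_y-L_xL_y \leq \frac{M}{x^2}F_j\!\left(\tfrac{1}{y}\right) + F_i\!\left(\tfrac{1}{x}\right)\frac{M}{y^2},
\]
which is exactly the claimed bound. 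There is no real obstacle here; the entire argument is a bookkeeping exercise once Corollary \ref{boundsforprob1} is available, with the Lipschitz hypothesis quantifying how much the sandwich can spread.
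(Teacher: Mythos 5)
Your argument is correct and is essentially the paper's own proof: both rest on the sandwich bounds of Corollary \ref{boundsforprob1}, the telescoping identity $U_xU_y-L_xL_y=(U_x-L_x)U_y+L_x(U_y-L_y)$, and the Lipschitz hypothesis (\ref{finitesup}) applied to $\tfrac{1}{x}-\tfrac{1}{x+1}\leq\tfrac{1}{x^2}$. The only (harmless) difference is one of packaging: the paper bounds the difference from above and from below in two separate computations, while you note that both $P(R_i>x,R_j>y)$ and $P(R_i>x)P(R_j>y)$ lie in the same interval $[L_xL_y,U_xU_y]$ and bound the absolute value in one stroke.
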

\begin{proof}
	The result follows by employing the inequalities described in Corollary \ref{boundsforprob}.
	\begin{align}&
	\nonumber P(R_i>x, R_j>y)-P(R_i>x)P(R_j>y)\leq F_i\left(\frac{1}{x}\right)F_j\left(\frac{1}{y}\right)-F_i\left(\frac{1}{x+1}\right)F_j\left(\frac{1}{y+1}\right)\\
	&=\nonumber F_i\left(\frac{1}{x}\right)\left[F_j\left(\frac{1}{y}\right)-F_j\left(\frac{1}{y+1}\right) \right]
	+\left[F_i\left(\frac{1}{x}\right)-F_i\left(\frac{1}{x+1}\right)\right]F_j\left(\frac{1}{y+1}\right)\\
	&\leq\label{upper} M\left[F_i\left(\frac{1}{x}\right)\frac{1}{y^2}+ F_j\left(\frac{1}{y}\right)\frac{1}{x^2}\right].
	\end{align}
	The reverse inequality can be obtained in a similar manner.
	\begin{align}&
	\nonumber P(R_i>x, R_j>y)-P(R_i>x)P(R_j>y)\geq F_i\left(\frac{1}{x+1}\right)F_j\left(\frac{1}{y+1}\right)-F_i\left(\frac{1}{x}\right)F_j\left(\frac{1}{y}\right)\\
	&=\nonumber F_j\left(\frac{1}{y+1}\right)\left[F_i\left(\frac{1}{x+1}\right)-F_i\left(\frac{1}{x}\right)\right]+ F_i\left(\frac{1}{x}\right)\left[F_j\left(\frac{1}{y+1}\right)-F_j\left(\frac{1}{y}\right) \right]\\
	&\geq\label{lower}- M\left[F_i\left(\frac{1}{x}\right)\frac{1}{y^2}+ F_j\left(\frac{1}{y}\right)\frac{1}{x^2}\right].
	\end{align}
	The desired result follows by combining (\ref{upper}) and (\ref{lower}).
\end{proof}

\begin{remark}
	When the corresponding densities $f_n$ exist for every $n$ and $\sup_{i,x}f_i(x)<\infty$, then $ M=  \sup_{i,x}f_i(x)$.
\end{remark}

\section{Exact Weak Laws}
In this section we provide some weak exact laws for the sequence $(R_n)_{n\geq 1}$,  i.e. the   convergence  is in probability ({\it weak}) only and the limit has a nonzero finite value ({\it exact}). The result that follows plays a significant role in the proof of the main theorem of this section.

\begin{theorem}[\cite{N2016}, Theorem 2.1] 
	\label{nakata} Let $(X_j)_{j\geq 1}$ be independent random variables whose distributions satisfy $P(|X_j|>x)\asymp x^{-\alpha}$ for $j\geq 1$ and $0<\alpha\leq 1$ and furthermore $$\limsup_{x \to \infty}\sup_{j\geq 1}x^{\alpha}P(|X_j|>x)<\infty.$$ Moreover, let $(a_n)_{n\geq 1}$ and $(b_n)_{n\geq 1}$ be positive sequences that satisfy
	\[
	\sum_{j=1}^{n} a_j^{\alpha} = o(b_n^{\alpha}).
	\]
	Then 
	\[
	\lim_{n\rightarrow \infty}b_n^{-1}\sum_{j=1}^{n}a_j\left(X_j-EX_jI\left(|X_j|\leq \frac{b_n}{a_j}\right)\right) = 0\quad\mbox{in probability}.
	\]
	In particular, if there is a constant $A$ such that
	\[
	\lim_{n\rightarrow \infty}b_n^{-1}\sum_{j=1}^{n}a_j EX_jI\left(|X_j|\leq \frac{b_n}{a_j}\right) = A
	\] 
	then
	\[
	\lim_{n\rightarrow \infty}b_n^{-1}\sum_{j=1}^{n}a_jX_j = A\quad\mbox{in probability}.
	\]	
\end{theorem}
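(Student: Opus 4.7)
The plan is the classical truncation-plus-Chebyshev argument. Fix $n$, set $c_j:=b_n/a_j$, and define the truncated variables $X_j^{(n)}:=X_j I(|X_j|\leq c_j)$. The first step is to discard the tail contribution via a union bound: using the uniform tail hypothesis $\sup_{j}x^{\alpha}P(|X_j|>x)\leq C$ for $x$ large,
\[
P\Bigl(\bigcup_{j=1}^n\{X_j\neq X_j^{(n)}\}\Bigr)\leq \sum_{j=1}^n P(|X_j|>c_j)\leq C\sum_{j=1}^n c_j^{-\alpha}=C\,b_n^{-\alpha}\sum_{j=1}^n a_j^{\alpha}\longrightarrow 0,
\]
by the assumption $\sum_{j=1}^n a_j^{\alpha}=o(b_n^{\alpha})$. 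Hence, outside an event of vanishing probability, the sum $\sum a_j X_j$ coincides with $\sum a_j X_j^{(n)}$, so it suffices to prove that $b_n^{-1}\sum_{j=1}^n a_j(X_j^{(n)}-EX_j^{(n)})\to 0$ in probability; note $EX_j^{(n)}$ is exactly the centring term appearing in the statement.

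The second step is a direct second-moment calculation. The summands $a_j(X_j^{(n)}-EX_j^{(n)})$ are independent and centred, so Chebyshev gives
\[
\Var\Bigl(b_n^{-1}\sum_{j=1}^n a_j X_j^{(n)}\Bigr)\leq b_n^{-2}\sum_{j=1}^n a_j^2\,E(X_j^{(n)})^2.
\]
Applying the layer-cake formula and the uniform tail bound,
\[
E(X_j^{(n)})^2=\int_0^{c_j}2x\,P(|X_j|>x,|X_j|\leq c_j)\,dx\leq \int_0^{c_j}2x\bigl(1\wedge Cx^{-\alpha}\bigr)\,dx\leq C'\,c_j^{\,2-\alpha},
\]
with $C'$ independent of $j$ (the bound in the endpoint case $\alpha=1$ is simply $2c_j=c_j^{\,2-\alpha}$, so no logarithmic correction is needed). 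Plugging back in,
\[
\Var\Bigl(b_n^{-1}\sum_{j=1}^n a_j X_j^{(n)}\Bigr)\leq C'\,b_n^{-2}\sum_{j=1}^n a_j^2 (b_n/a_j)^{2-\alpha}=C'\,b_n^{-\alpha}\sum_{j=1}^n a_j^{\alpha}\longrightarrow 0,
\]
again by the hypothesis. Combining with the first step yields the first assertion. The second assertion is then immediate by adding and subtracting the centring term, whose limit is assumed to be $A$.

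The main obstacle is verifying that the tail and second-moment estimates use constants that are genuinely uniform in $j$; this is exactly the content of the hypothesis $\limsup_{x\to\infty}\sup_{j\geq 1}x^{\alpha}P(|X_j|>x)<\infty$, which produces the single constant $C$ appearing both in the union bound and inside the layer-cake integral. The lower half of the $\asymp$ assumption plays no role in the weak-law argument itself (only in later applications, where it controls the centring $EX_j^{(n)}$); thus the entire proof rests on the uniform upper tail bound together with the summability condition $\sum a_j^{\alpha}=o(b_n^{\alpha})$, used twice with different exponents of $b_n$.
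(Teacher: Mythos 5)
Your argument is correct. Note, however, that the paper does not prove this statement at all: it is quoted verbatim from Nakata \cite{N2016} (Theorem 2.1) as an external tool, so there is no in-paper proof to compare against. What you have written is the standard truncation--Chebyshev (degenerate convergence) proof, and it is sound: the union bound disposes of the untruncated tails, independence reduces the variance of the centred truncated sum to $b_n^{-2}\sum_j a_j^2 E(X_j^{(n)})^2$, and the layer-cake estimate $E(X_j^{(n)})^2\leq C'c_j^{2-\alpha}$ (valid for all $0<\alpha\leq 1<2$) closes the loop, with both steps consuming the hypothesis $\sum_j a_j^{\alpha}=o(b_n^{\alpha})$. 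The only point worth making explicit is that the hypothesis gives $\sup_j x^{\alpha}P(|X_j|>x)\leq C$ only for $x$ beyond some $x_0$; this is harmless both because the bound extends to all $x>0$ after enlarging $C$ (since $x^{\alpha}P(|X_j|>x)\leq x_0^{\alpha}$ for $x\leq x_0$) and because $\min_{j\leq n}c_j^{\alpha}\geq b_n^{\alpha}/\sum_{j\leq n}a_j^{\alpha}\to\infty$, so the truncation levels eventually all lie in the range where the uniform bound applies. You are also right that the lower half of the $\asymp$ hypothesis is not used in this part of the theorem.
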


\begin{remark}
	Theorem \ref{nakata} has been recently generalized  in \cite{MA2018} to the case of negative quadrant dependent random variables.
\end{remark}

\noindent Theorem \ref{nakata} is now used in order to obtain the main result of this section which eventually will lead to an exact weak law of large numbers. 

\begin{theorem}
	\label{WeakLaw1}Let $(R_n)_{n\geq 1}$ be as in (\ref{Rdef}). Assume that there exists $\alpha \in  (0,1]$ such that, for every $n$,
	\begin{enumerate}
		\item[i.]$F_n\asymp x^{\alpha}\quad\mbox{as}\quad x\to 0.$
		\item[ii.] {\bf Uniformity condition $\mathcal{H}_\alpha$}: 
		$$\limsup_{x \to 0}\sup_{n\geq 1} \frac{F_n(x)}{x^\alpha}< \infty.$$
	\end{enumerate}  
	Let $(a_n)_{n\geq 1}$ and $(b_n)_{n\geq 1}$ be positive sequences such that
	\begin{equation}
	\label{seqcond}\sum_{k=1}^{n}a_k^{\alpha}=o(b_n^{\alpha})\quad\mbox{as}\quad n \to \infty.
	\end{equation}
	Define $U_n$ to be a sequence of independent random variables defined on $[0,1]$ such that $U_n \sim F_n $ and let $Y_n := \frac{1}{U_n}$. 
	If there is a constant $A$ such that
	\begin{equation}
	\label{sumcond}\lim_{n\to \infty}b_n^{-1}\sum_{k=1}^n a_k EY_kI\left(Y_k\leq \frac{b_n}{a_k}\right)= A
	\end{equation}
	then \[
	\lim_{n\to \infty}b_n^{-1}\sum_{k=1}^n a_k R_k = A~~\mbox{in probability}.
	\]
\end{theorem}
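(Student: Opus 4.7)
The plan is to couple the dependent sequence $(R_n)$ with the independent surrogates $Y_n=U_n^{-1}$, invoke Theorem \ref{nakata} for the independent case, and then transfer the conclusion back to $(R_n)$ using the characteristic-function estimate of Theorem \ref{teoremadistanza}. The intuition is that no natural weak-law machinery is available for $(R_n)$ directly, but Theorem \ref{teoremadistanza} says that jointly the $R_k$ are indistinguishable from the independent $Y_k$ at the level of characteristic functions, up to an explicit additive error that will turn out to be vanishing under (\ref{seqcond}).

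First I would check that the independent family $(Y_n)$ meets the hypotheses of Theorem \ref{nakata}. Since $U_n\in[0,1]$ has distribution $F_n$, for $x\ge 1$ one has $P(Y_n>x)=F_n(1/x)$; the substitution $y=1/x$ then converts condition (i) into $P(Y_n>x)\asymp x^{-\alpha}$ as $x\to\infty$ and converts the uniformity condition $\mathcal{H}_\alpha$ into $\limsup_{x\to\infty}\sup_{n\ge 1}x^{\alpha}P(Y_n>x)<\infty$. Combined with the growth condition (\ref{seqcond}) and the truncated-mean hypothesis (\ref{sumcond}), Theorem \ref{nakata} then delivers $b_n^{-1}\sum_{k=1}^n a_kY_k\to A$ in probability.

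It remains to pass from the independent sum to the $R$-sum. Evaluating at $t\in\mathbb{R}$, one has $E[\exp(itb_n^{-1}\sum_k a_kR_k)]=\phi_{R_1,\dots,R_n}(ta_1/b_n,\dots,ta_n/b_n)$, while independence of the $Y_k$ factorizes the corresponding object for the surrogates into $\prod_{k=1}^n\psi_k(ta_k/b_n)$. Theorem \ref{teoremadistanza} bounds the difference by $|t|b_n^{-1}\sum_{k=1}^n a_k$, so the task reduces to proving $\sum_{k=1}^n a_k=o(b_n)$. Since $\alpha\in(0,1]$, the map $x\mapsto x^\alpha$ is subadditive and $\bigl(\sum_k a_k\bigr)^\alpha\le\sum_k a_k^\alpha$; combined with (\ref{seqcond}), this yields $\sum_{k=1}^n a_k\le\bigl(\sum_{k=1}^n a_k^\alpha\bigr)^{1/\alpha}=o(b_n)$. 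Pointwise convergence of characteristic functions and L\'evy's continuity theorem therefore give $b_n^{-1}\sum_k a_kR_k\to A$ in distribution, and since $A$ is a constant this is equivalent to convergence in probability.

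The only step that requires a moment of thought is the reduction of (\ref{seqcond}) to the stronger looking $\sum a_k=o(b_n)$: this is precisely the scale at which the error bound of Theorem \ref{teoremadistanza} vanishes, and it is automatic thanks to the restriction $\alpha\le 1$ and subadditivity of $x^{\alpha}$. All other ingredients are either standard or have been prepared in the Preliminaries section, so conceptually the main obstacle (dependence of the $R_k$) has already been dissolved by Theorem \ref{teoremadistanza}.
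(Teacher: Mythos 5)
Your proposal is correct and follows essentially the same route as the paper: verify the tail and uniformity hypotheses of Theorem \ref{nakata} for the independent surrogates $Y_n=U_n^{-1}$ to get $b_n^{-1}\sum_k a_kY_k\to A$ in probability, then transfer to $(R_n)$ via the characteristic-function bound of Theorem \ref{teoremadistanza}, with the error controlled by $\sum_k a_k\le\bigl(\sum_k a_k^{\alpha}\bigr)^{1/\alpha}=o(b_n)$ from subadditivity of $x\mapsto x^{\alpha}$ for $\alpha\in(0,1]$. This is exactly the paper's argument, including the final inequality $0\le\sum_{k=1}^n a_k/b_n\le\bigl(\sum_{k=1}^n a_k^{\alpha}/b_n^{\alpha}\bigr)^{1/\alpha}$.
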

\begin{proof}
	Since
	\[
	P(Y_n >x) = F_n\left(\frac{1}{x}\right), 
	\]	 
	we have 
	\begin{equation}
	\label{condition1}P(Y_n >x)\asymp x^{-\alpha}\quad\mbox{  as}\quad x\to 0.
	\end{equation}
	and
	\begin{equation}
	\label{condition2}\limsup_{x \to \infty}\sup_{n\geq 1}  x^\alpha P(Y_n >x)< \infty.
	\end{equation}
	Hence, according to Theorem \ref{nakata}, we have that
	\[
	\lim_{n\to \infty}b_n^{-1}\sum_{k=1}^n a_k Y_k = A~~\mbox{in probability}.
	\] 
	Let $W_n =b_n^{-1}\sum_{k=1}^n a_k R_k$. It is sufficient to prove that 
	\[
	\xi_{W_n}(t)\rightarrow e^{itA}
	\]
	where $\xi_{W_n}$ is the characteristic function of $W_n$. Now
	$$\xi_{W_n}(t)= E\left[e^{it\frac{1}{b_n} \sum_{k=1}^n a_kR_k } \right]
	=E\left[e^{i\sum_{k=1}^n \frac{ t a_k}{b_n} R_k} \right]
	= E\left[e^{i\sum_{k=1}^n  t_{k,n}R_k}\right]=\phi_{R_1, \dots, R_n}(t_ {{1,n} }, \dots, t_{{n,n}} )$$
	with $t_{k,n}= \frac{ ta_k }{b_n}$. By applying Theorem \ref{teoremadistanza} we have that
	$$\big| \phi_{R_1, \dots, R_n}(t_ {{1,n} }, \dots, t_{{n,n}} )-\prod_{k=1}^n\psi_k(t_ {k,n})\big|\leq  \sum_{k=1}^n |t_ {k,n}|=  |t|\sum_{k=1}^n\frac{a_k }{b_n}.$$
	Observe that for $0<\alpha\leq1$ we have that 
	$$0 \leq  \sum_{k=1}^n\frac{a_k }{b_n}\leq \left(\sum_{k=1}^n\frac{a_k^\alpha  }{b_n^\alpha}\right)^\frac{1}{ \alpha}.$$
	Thus, the desired convergence is obtained via (\ref{seqcond}).
\end{proof}

\begin{remark}
	Note that  either conditions (\ref{condition1}) or (\ref{condition2}), imply infinite mean for the random variable involved. 
\end{remark}

\begin{remark}
	It is important to highlight that the exact weak law presented in Theorem \ref{WeakLaw1} is proven without any assumptions on the dependence structure of the random variables $(R_n)_{n\geq 1}$.
\end{remark}

\noindent Theorem \ref{WeakLaw1} is the ``key" result for obtaining the  four  theorems that follow.

\begin{theorem}
	\label{WeakLaw3}   Let $(c_n)_{n\geq 1}$ be a sequence of positive numbers such that $\sup_n c_n < \infty$. Define the sequence  
	\begin{equation}
	\label{Cn}
	C_n:=\sum_{k=1}^{n}c_k^{-1},
	\end{equation}
	and assume that $$\lim_{n \to \infty}C_n=\infty.$$
	Furthermore, assume that there are real numbers $\kappa$ and $\ell$ such that the following conditions are satisfied:
	\begin{itemize}
		\item[i.]$$\lim_{n \to \infty} \frac{1}{C_n \log C_n} \sum_{k=1}^n \frac{\log c_k}{c_k }= \ell;$$
		\item[ii.]$$\lim_{n \to \infty}  \frac{n}{C_n \log C_n}= \kappa.$$
	\end{itemize} 
	Let $(R_n)_{n\geq 1}$ be as in (\ref{Rdef}) with $F_n$ given by
	$$F_n(x) = \begin{cases}0 & x < 0\\\displaystyle \frac{x}{1-c_n x} & 0 \leq x< \displaystyle\frac{1}{1+c_n}\\\\ 1 & x \geq\displaystyle \frac{1}{1+c_n}  .
	\end{cases}$$
	Then 
	\[
	\lim_{n\to\infty}\frac{1}{C_n\log C_n}\sum_{k=1}^{n}c_k^{-1}R_k= \ell+ 1+\kappa~~\mbox{in probability.}	
	\]	       
\end{theorem}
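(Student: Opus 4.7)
The plan is to apply Theorem~\ref{WeakLaw1} with $\alpha=1$, $a_k = c_k^{-1}$, and $b_n = C_n \log C_n$. The bulk of the work is verifying that the truncated-mean limit (\ref{sumcond}) takes the value $A = \ell + 1 + \kappa$; the remaining hypotheses can be checked almost immediately.

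To check the hypotheses on $(F_n)$: for $x \to 0$, $F_n(x)/x = 1/(1-c_n x) \to 1$, so $F_n \asymp x$, giving condition (i) with $\alpha=1$; moreover, if $M := \sup_n c_n < \infty$, then $F_n(x)/x \leq 1/(1-Mx)$ uniformly in $n$ for small $x$, establishing the uniformity condition $\mathcal{H}_1$. Condition (\ref{seqcond}) is clear because $\sum_{k=1}^n a_k = C_n$ and $C_n/b_n = 1/\log C_n \to 0$.

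For the truncated means, note that $P(Y_k > x) = F_k(1/x)$ equals $1$ when $x \leq 1+c_k$ and equals $1/(x-c_k)$ when $x > 1+c_k$, so $Y_k \geq 1+c_k$ almost surely. Using $E[Y_k I(Y_k \leq t)] = \int_0^t P(Y_k > x)\,dx - tP(Y_k > t)$, a direct calculation gives, for $t \geq 1+c_k$,
$$E[Y_k I(Y_k \leq t)] = c_k + \log(t - c_k) - \frac{c_k}{t-c_k}.$$
Setting $t_k := b_n/a_k = c_k b_n$, the formula applies whenever $c_k^{-1} \leq b_n - 1$; the indices $k \leq n$ violating this number at most $C_n/(b_n-1) = O(1/\log C_n)$, hence zero for large $n$, so the formula in fact applies for every $k \leq n$.

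Summing and dividing by $b_n$ yields
$$\frac{1}{b_n}\sum_{k=1}^n a_k E[Y_k I(Y_k \leq t_k)] = \frac{n}{C_n \log C_n} + \frac{1}{C_n \log C_n}\sum_{k=1}^n \frac{\log c_k}{c_k} + \frac{\log(b_n-1)}{\log C_n} - \frac{1}{b_n-1}.$$
The four terms tend respectively to $\kappa$, $\ell$, $1$, and $0$ (using $\log(C_n \log C_n - 1)/\log C_n \to 1$), so the sum converges to $\ell + 1 + \kappa$. Theorem~\ref{WeakLaw1} then delivers the stated convergence in probability. The only delicate point is controlling the ``bad'' indices where the truncation threshold falls below $1+c_k$; the pigeonhole estimate via $C_n = \sum_{k=1}^n c_k^{-1}$ shows there are eventually none, after which the calculation is completely elementary asymptotics.
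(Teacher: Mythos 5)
Your proposal is correct and follows essentially the same route as the paper: apply Theorem~\ref{WeakLaw1} with $\alpha=1$, $a_k=c_k^{-1}$, $b_n=C_n\log C_n$, and compute the truncated means of $Y_k=1/U_k$ to get the four terms converging to $\kappa$, $\ell$, $1$, $0$. The only differences are cosmetic and benign: you use the tail-integral representation of $E[Y_kI(Y_k\leq t)]$ instead of integrating against the density, you explicitly verify (by the pigeonhole bound $m<C_n/(b_n-1)$) that the truncation level eventually exceeds $1+c_k$ for all $k\leq n$ --- a point the paper leaves implicit --- and your final remainder term should read $-1/((b_n-1)\log C_n)$ rather than $-1/(b_n-1)$, which still tends to $0$ and does not affect the conclusion.
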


\begin{proof} 
	Note that the assumption $\sup_n c_n < \infty$ ensures that $F_n$ satisfies the uniformity condition $\mathcal{H}_1$ and that the condition $F_n \asymp x^{\alpha}$ is also satisfied with $\alpha =1$. Let $U_n$ be a sequence of independent random variables defined on $[0,1]$ such that $U_n \sim F_n $ and define $Y_n := \frac{1}{U_n}$. 
	
	\noindent Then
	$$ P(Y_n \leq y )= \begin{cases} 0 &  y < 1+ c_n\\ \displaystyle\frac{y-c_n-1}{ y-c_n} & y \geq 1+ c_n.
	\end{cases}$$
	Let $a_k = \frac{1}{c_k}$, $b_n = C_n \log C_n.$
	The sequence $Y_n$ satisfies condition \eqref{condition2}   with $\alpha =1$, so  by Theorem \ref{WeakLaw1}, it suffices to prove that
	\begin{equation*}
	\lim_{n\to \infty}b_n^{-1}\sum_{k=1}^n a_k EY_kI\left(Y_k\leq \frac{b_n}{a_k}\right)=  \ell+ 1+\kappa.
	\end{equation*}  
	We have
	\begin{equation*}
	EY_kI\left(Y_k\leq \frac{b_n}{a_k}\right)=\int_{1+ c_k}^{c_k C_n \log C_n} \frac{y}{(y-c_k)^2}\, dy = \log c_k + \log | C_n \log C_ n-1 | + c_k - \frac{  1}{ C_n \log C_n -1}.
	\end{equation*}		
	Thus,
	\begin{align*}&
	b_n^{-1}\sum_{k=1}^n a_k EY_kI\left(Y_k\leq \frac{b_n}{a_k}\right)= \frac{1}{C_n \log C_n} \sum_{k=1}^n \frac{\log c_k}{c_k }+ \frac{  \log | C_n \log C_ n-1 |}{   \log C_n}+ \frac{n}{ C_n \log C_ n}- \frac{1 }{(C_n \log C_ n-1)   \log C_ n} \\ 
	& \to \ell + 1 + \kappa   , \qquad n \to \infty.  
	\end{align*} 
\end{proof} 
\noindent   Another application of Theorem \ref{WeakLaw1} is given below by taking into consideration distribution functions of different structure. 

\begin{theorem}
	\label{WeakLaw2} Let  $(c_n)_{n\geq 1} $ be a sequence of positive numbers and let $(C_n)_{n\geq 1}$ be as in \eqref{Cn}; we assume  that
	
	\begin{itemize}
		\item[i.]$$\lim_{n \to \infty}C_n=\infty;$$
		\item[ii.]$$\lim_{n \to \infty} \frac{1}{C_n \log C_n} \sum_{k=1}^n \frac{1}{c_k }\log \left (  \frac{c_k+1}{c_k} \right  )=m\quad\mbox{for}\quad m \in \mathbb{R}.$$
		
	\end{itemize} 
	Let $(R_n)_{n\geq 1}$ be as in (\ref{Rdef}) with $F_n$ given by
	
	$$F_n(x) = \begin{cases}0 & x < 0\\\displaystyle \frac{x}{1+c_n x} & 0 \leq x\leq \displaystyle1\\\\ 1 & x >1 .
	
\end{cases}$$
Then,
\[
\lim_{n\to\infty}\frac{1}{C_n\log C_n}\sum_{k=1}^{n}c_k^{-1}R_k=  1- m  ~~\mbox{in probability,}	
\]	 
\end{theorem}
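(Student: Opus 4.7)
The plan is to apply Theorem \ref{WeakLaw1} with the exponent $\alpha = 1$ and the weights $a_k = 1/c_k$, $b_n = C_n \log C_n$. The argument has two phases: first verifying the three hypotheses of Theorem \ref{WeakLaw1}, and second identifying the constant $A$ in \eqref{sumcond} by computing the relevant truncated mean explicitly.

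The hypotheses are direct. One has $F_n(x)/x = 1/(1+c_n x) \to 1$ as $x \to 0$ for every $n$, which simultaneously establishes $F_n \asymp x^{\alpha}$ with $\alpha = 1$ and the uniformity condition $\mathcal{H}_1$, since $1/(1+c_n x)\leq 1$ for all $n$ and all $x > 0$. Condition \eqref{seqcond} reduces to $\sum_{k=1}^{n} a_k = C_n = o(C_n \log C_n)$, which is a consequence of hypothesis i.

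The substantive step is computing the truncated mean at level $b_n/a_k = c_k C_n \log C_n$. The distribution $F_k$ has a jump at $x=1$ of size $c_k/(1+c_k)$, so $Y_k = 1/U_k$ carries an atom at $y=1$ of the same mass, while on $(1,\infty)$ the tail is $P(Y_k > y) = F_k(1/y) = 1/(y+c_k)$, giving density $1/(y+c_k)^2$. A direct integration then yields
\[
EY_k I\!\left(Y_k \leq \tfrac{b_n}{a_k}\right) = \frac{c_k}{1+c_k} + \int_1^{c_k C_n \log C_n}\!\!\frac{y}{(y+c_k)^2}\,dy = \log(C_n \log C_n + 1) - \log\frac{c_k+1}{c_k} + \frac{1}{C_n \log C_n + 1},
\]
after the atom contribution cancels the boundary term $c_k/(1+c_k)$ coming from the antiderivative.

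Multiplying by $a_k = 1/c_k$, summing in $k$, dividing by $b_n = C_n \log C_n$ and using $C_n = \sum_{k=1}^{n} 1/c_k$, the three pieces become respectively $\log(C_n\log C_n+1)/\log C_n \to 1$; the middle piece $-\frac{1}{C_n \log C_n}\sum_{k=1}^{n}\frac{1}{c_k}\log\frac{c_k+1}{c_k} \to -m$ by hypothesis ii; and $1/(C_n \log C_n + 1)\to 0$. Hence the limit in \eqref{sumcond} is $A = 1-m$, and Theorem \ref{WeakLaw1} delivers the claimed convergence in probability. The only delicate point is remembering to incorporate the atom of $F_k$ at $x=1$ when setting up the truncated mean; once this is done, the rest is routine bookkeeping tuned to the normalisation $b_n = C_n \log C_n$, which is precisely what matches the leading $\log(C_n\log C_n)$ growth of each individual $EY_k I(\cdot)$.
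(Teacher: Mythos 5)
Your proposal is correct and follows essentially the same route as the paper: apply Theorem \ref{WeakLaw1} with $\alpha=1$, $a_k=1/c_k$, $b_n=C_n\log C_n$, account for the atom of $Y_k$ at $1$ (equivalently the jump of $F_k$ at $x=1$), and compute the truncated mean to get $\log(C_n\log C_n+1)-\log\frac{c_k+1}{c_k}+\frac{1}{C_n\log C_n+1}$, whose weighted average converges to $1-m$. Your explicit verification of condition \eqref{seqcond} and of the uniformity condition $\mathcal{H}_1$ is in fact slightly more careful than the paper's.
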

\begin{proof} The proof is similar to the preceding one. First notice that since $\inf_n c_n \geq 0$, $F_n$ satisfies both conditions of Theorem \ref{WeakLaw1} with $\alpha =1$. Let $U_n$ be a sequence of independent random variables defined on $[0,1]$ such that $U_n \sim F_n $ and define $Y_n := \frac{1}{U_n}$ and let $a_k = \frac{1}{c_k}$, $b_n = C_n \log C_n.$
	We have
	$$P(Y_n \leq y )= \begin{cases} 0 &  y < 1\\ \displaystyle\frac{y+c_n-1}{ y+c_n} & y \geq 1.
	\end{cases}$$
	By Theorem \ref{WeakLaw1}, it suffices to prove that
	\begin{equation*}
	\lim_{n\to \infty}b_n^{-1}\sum_{k=1}^n a_k EY_kI\left(Y_k\leq \frac{b_n}{a_k}\right)=   1- m.
	\end{equation*} 
	Observe that
	\begin{equation*}
	EY_kI(Y_k\leq x) = P(Y_k= 1) + \int_{1}^{x}\frac{t}{(t+c_k)^2}dt =\log\left(\frac{x+c_k}{1+c_k}\right)+  \frac{ c_k}{x+c_k}.
	\end{equation*}
	Therefore, letting again $a_k = \frac{1}{c_k}$, $b_n = C_n \log C_n,$
	\begin{align*}&
	b_n^{-1}\sum_{k=1}^n a_k EY_kI\left(Y_k\leq \frac{b_n}{a_k}\right)= \frac{1}{C_n \log C_n} \sum_{k=1}^n   \frac{1}{c_k}\left\{\log (C_n \log C_n+1) + \log \frac{c_k}{1+ c_k}+ \frac{1}{C_n \log C_n+1}\right\} \\
	&=\frac{\log(C_n \log C_n+1)}{\log C_n}  
	-\frac{1}{C_n \log C_n} \sum_{k=1}^n \frac{1}{c_k }\log \left (  \frac{c_k+1}{c_k} \right  ) + \frac{1}{(C_n \log C_n+1) \log C_n}\\ & \to   1 -  m   , \qquad n \to \infty. 
	\end{align*}
	Hence, by Theorem \ref{WeakLaw1} we have that
	\[
	\lim_{n\to\infty}\frac{\sum_{k=1}^{n}c_k^{-1}Y_k}{C_n\log C_n} =  1- m ~~\mbox{in probability.}
	\] 	
	Having established the convergence for the sequence $\{Y_n,n\geq 1\}$, the convergence of $(R_n)_{n\geq 1}$ derives by Theorem \ref{WeakLaw1}. 
\end{proof}
\begin{remark}$~$
	\begin{enumerate}
		\item[(i)]Note that the result of Theorem \ref{WeakLaw2} can be considered as a generalization of Theorem 3.1 of \cite{N2016} and of Corollary 2.2 in \cite{MA2018} since here any assumption for the  dependence structure of the random variables $(R_n)_{n\geq 1}$ is dropped. 
		
		\item[(ii)]   Observe that if we consider $c_n =1$ for every $n$  in  both  Theorems \ref{WeakLaw3} and \ref{WeakLaw2}, we obtain that 
		\[
		\lim_{n\to\infty}\frac{\sum_{k=1}^{n} R_k}{ n\log  n} = 1  ~~\mbox{in probability.}
		\] 	
		which can be also obtained from   Theorem 2.2 of \cite {G2018}. 
		
		\item[(iii)]It is easy to check that the assumptions on the sequence $c_n$ needed in Theorems \ref{WeakLaw3} and \ref{WeakLaw2} are satisfied for $c_n = \frac{1}{n^\beta}$   with $\beta \geq 0$ (with $\ell = -m=  \frac{-\beta}{\beta +1}$, $\kappa  =0$).  
	\end{enumerate}
\end{remark}

\noindent The following is an another exact weak law.

\begin{theorem}
	Let $(R_n)_{n\geq 1}$ be as in (\ref{Rdef}) with $F_n = F=$ the uniform distribution on $[0,1]$. 
	Then, for $b\geq 2$
	\[
	\lim_{n\to\infty}\frac{1}{\log^bn}\sum_{k=1}^{n}\frac{\log^{b-2}k}{k}R_k= \frac{1}{b}~~\mbox{in probability.}
	\]
\end{theorem}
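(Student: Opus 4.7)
The plan is to apply Theorem \ref{WeakLaw1} with $\alpha = 1$, $a_k = \log^{b-2}k/k$ and $b_n = \log^b n$. Since $F_n=F$ is the uniform distribution, $F(x) = x$ on $[0,1]$, which trivially satisfies both $F\asymp x$ as $x\to 0$ and the uniformity condition $\mathcal{H}_1$. So only the verification of \eqref{seqcond} and the computation of the limit in \eqref{sumcond} remain.

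First I would verify \eqref{seqcond}. By comparison with $\int_2^n \log^{b-2}(x)/x\,dx = \log^{b-1}(n)/(b-1)$ (valid since $b-1 \geq 1 > 0$), we have
$$\sum_{k=1}^n \frac{\log^{b-2}k}{k}\sim \frac{\log^{b-1}n}{b-1} = o(\log^b n)=o(b_n),$$
so \eqref{seqcond} holds with $\alpha=1$. Next, since $U_k$ is uniform on $[0,1]$, $Y_k=1/U_k$ has tail $P(Y_k>y)=1/y$ for $y\geq 1$, so
$$EY_kI\!\left(Y_k\leq M\right) = \int_1^M y\cdot\frac{dy}{y^2} = \log M, \qquad M\geq 1.$$
With $M=b_n/a_k = k\log^b(n)/\log^{b-2}k$, this gives
$$EY_kI\!\left(Y_k\leq \frac{b_n}{a_k}\right) = \log k + b\log\log n - (b-2)\log\log k.$$

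I would then split the key sum accordingly:
\begin{align*}
\frac{1}{\log^b n}\sum_{k=1}^n \frac{\log^{b-2}k}{k}\cdot EY_kI\!\left(Y_k\leq \frac{b_n}{a_k}\right)
&= \frac{1}{\log^b n}\sum_{k=1}^n \frac{\log^{b-1}k}{k} \\
&\quad+ \frac{b\log\log n}{\log^b n}\sum_{k=1}^n \frac{\log^{b-2}k}{k} \\
&\quad- \frac{b-2}{\log^b n}\sum_{k=1}^n \frac{\log^{b-2}k\,\log\log k}{k}.
\end{align*}
By integral comparison, $\sum_{k=1}^n \log^{b-1}(k)/k\sim \log^b(n)/b$, so the first term tends to $1/b$. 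The second term is asymptotic to $b\log\log(n)/((b-1)\log n)\to 0$. For the third term, $\sum_{k=1}^n \log^{b-2}(k)\log\log(k)/k\sim \log^{b-1}(n)\log\log(n)/(b-1)$ (again via $\int \log^{b-2}(x)\log\log(x)/x\,dx$ integrated by parts or by $\log\log$ being slowly varying), so this term is $O(\log\log(n)/\log n)\to 0$. Hence the limit in \eqref{sumcond} equals $A=1/b$, and Theorem \ref{WeakLaw1} yields the claimed convergence in probability.

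The main obstacle is really just the bookkeeping of the three slowly varying error terms above; each individual asymptotic is a routine integral comparison, but one has to be careful that the choice $b\geq 2$ ensures that $\log^{b-2}k$ is non-decreasing and the formal antiderivatives behave as expected. The use of Theorem \ref{WeakLaw1} is what frees us from having to deal with any dependence structure among the $R_k$, reducing the problem entirely to the calculation of the truncated first moments of $Y_k$.
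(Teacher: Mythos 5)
Your proposal is correct and follows essentially the same route as the paper: the paper likewise reduces to Theorem \ref{WeakLaw1} with $\alpha=1$, computes $EY_kI(Y_k\leq b_n/a_k)=\log k+b\log\log n-(b-2)\log\log k$, and splits the weighted sum into the same three terms, the first tending to $1/b$ and the other two vanishing. The only differences are cosmetic (you justify the integral comparisons slightly more explicitly), so there is nothing to add.
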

\begin{proof}
	Let $U_n$ be independent and uniformly distributed random variables on $[0,1]$ and $Y_n= \frac{1}{U_n}$. Then
	\[
	P(Y_k\leq y)  = \begin{cases} 0 & y <1\\ 1-\displaystyle \frac{1}{y}& y \geq 1.\end{cases} 
	\]
	Let $b_n = \log^bn$ and $a_k = \frac{\log^{b-2}k}{k}$ for $b\geq 2$. Observe that 
	\[
	\lim_{n \to \infty}\frac{1}{\log^bn}\sum_{k=1}^n\frac{\log^{b-2}k}{k} = 0,
	\] 
	i.e condition (\ref{seqcond}) is satisfied with $\alpha =1$. Note that 
	\begin{align*}
	E\left(Y_kI\left(Y_k\leq \frac{b_n}{a_k} \right)\right) =\int_1^\frac{b_n}{a_k}  \frac{1}{t} dt = \log \left(\frac{b_n}{a_k}\right)= \log k + b \log \log n - (b-2) \log \log k.
	\end{align*} 
	Thus,
	\begin{align*}&
	b_n^{-1}\sum_{k=1}^na_kE\left(Y_kI\left(Y_k\leq \frac{b_n}{a_k} \right)\right) 
	=\frac{1}{ \log^bn}\sum_{k=1}^n\frac{\log^{b-1}k}{k}+  \frac{ b \log \log n}{ \log^bn}\sum_{k=1}^n\frac{\log^{b-2}k}{k}\\&~~~~~~~~~~~~~~~~~~~~~~~~~~~~~~~~~~~~~~~~~~~~~~~-\frac{ b-2  }{ \log^bn}\sum_{k=1}^n\frac{\log^{b-2}k\cdot\log \log k}{k}  \to \frac{1}{b}, \quad n \to \infty.
	\end{align*}	 
	The convergence of the sequence $\{R_n,n\geq 1\}$ is established by  Theorem \ref{WeakLaw1}.
\end{proof}

\begin{remark}
	In both papers \cite{N2016} and \cite{MA2018} (see Corollary 2.1 in both), it is proven that in the case where $\alpha \in (0,1)$ the limit of the weighted partial sum is equal to zero, i.e. the weak law is established but it is not an exact weak law. It is of interest to check whether this result is also valid in this framework as well. The answer is given by the result that follows.
\end{remark}

\begin{theorem}
	Let $(R_n)_{n\geq 1}$ be as in (\ref{Rdef}) with $F_n(x) = x^\alpha$ on $[0,1]$ and $0<\alpha <1$ for every $n$. For every $(a_n)_{n\geq 1}$ and $(b_n)_{n\geq 1}$ such that condition (\ref{seqcond}) holds, 
	we have 
	\[
	\lim_{n\to\infty}\frac{1}{b_n}\sum_{k=1}^{n}a_kR_k=   0~~\mbox{in probability}.
	\] 
\end{theorem}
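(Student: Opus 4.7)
The plan is to invoke Theorem \ref{WeakLaw1} directly with the constant $A = 0$, so the only real work is verifying the hypotheses and computing the truncated-mean sum in \eqref{sumcond}.

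First I would observe that $F_n(x) = x^\alpha$ trivially satisfies both hypotheses of Theorem \ref{WeakLaw1}: $F_n \asymp x^\alpha$ as $x \to 0$ holds with constants equal to $1$, and the uniformity condition $\mathcal{H}_\alpha$ holds because $F_n(x)/x^\alpha \equiv 1$ for all $n$ and all $x \in (0,1]$. Setting $U_n \sim F_n$ and $Y_n = U_n^{-1}$, the tail of $Y_n$ is $P(Y_n > y) = y^{-\alpha}$ for $y \geq 1$, with density $\alpha y^{-\alpha - 1}$ on $[1,\infty)$.

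Next I would compute the truncated mean. A direct integration gives, for $b_n/a_k \geq 1$,
\[
E\bigl[Y_k I(Y_k \leq b_n/a_k)\bigr] = \int_1^{b_n/a_k} \alpha y^{-\alpha}\, dy = \frac{\alpha}{1-\alpha}\left(\left(\frac{b_n}{a_k}\right)^{1-\alpha} - 1\right),
\]
so that
\[
\frac{1}{b_n}\sum_{k=1}^n a_k E\bigl[Y_k I(Y_k \leq b_n/a_k)\bigr] = \frac{\alpha}{1-\alpha}\left(\frac{\sum_{k=1}^n a_k^{\alpha}}{b_n^{\alpha}} - \frac{\sum_{k=1}^n a_k}{b_n}\right).
\]
The first term inside the parentheses tends to $0$ by the hypothesis \eqref{seqcond}. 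For the second term I would use the fact that the $\ell^p$ norm of a finite sequence is non-increasing in $p$, which for $0 < \alpha < 1$ yields $\sum_{k=1}^n a_k \leq \bigl(\sum_{k=1}^n a_k^\alpha\bigr)^{1/\alpha}$, hence
\[
\frac{\sum_{k=1}^n a_k}{b_n} \leq \left(\frac{\sum_{k=1}^n a_k^{\alpha}}{b_n^{\alpha}}\right)^{1/\alpha} \to 0
\]
by \eqref{seqcond} again. Thus \eqref{sumcond} holds with $A = 0$.

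Finally, Theorem \ref{WeakLaw1} delivers $b_n^{-1}\sum_{k=1}^n a_k R_k \to 0$ in probability, completing the proof. I do not expect any serious obstacle here: the only mildly delicate point is handling the linear term $\sum_{k=1}^n a_k/b_n$, which is why the restriction $0 < \alpha < 1$ is used to dominate $\ell^1$ by $\ell^\alpha$; the rest is the same scheme already used in the earlier exact-weak-law theorems in this section.
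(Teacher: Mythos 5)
Your proposal is correct and follows essentially the same route as the paper: both compute $E[Y_k I(Y_k \leq b_n/a_k)] = \frac{\alpha}{1-\alpha}((b_n/a_k)^{1-\alpha}-1)$, reduce the truncated-mean sum to $\frac{\alpha}{1-\alpha}\bigl(\sum_k a_k^\alpha/b_n^\alpha - \sum_k a_k/b_n\bigr)$, and conclude via Theorem \ref{WeakLaw1} with $A=0$. Your explicit justification that $\sum_k a_k/b_n \leq (\sum_k a_k^\alpha/b_n^\alpha)^{1/\alpha} \to 0$ is a detail the paper leaves implicit (it is the same bound already used in the proof of Theorem \ref{WeakLaw1}), so your write-up is if anything slightly more complete.
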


\begin{proof}
	By applying similar steps as in the previous proofs we have that
	\begin{align*}
	b_n^{-1}\sum_{k=1}^na_kE\left(Y_kI\left(Y_k\leq \frac{b_n}{a_k} \right)\right) =\frac{\alpha}{1-\alpha} \sum_{k=1}^n\frac{a_k }{ b_n}\left[ \left(\frac{b_n}{a_k}\right)^{1-\alpha} -1\right]= 
	\frac{\alpha}{1-\alpha} \left[\sum_{k=1}^n\Big(\frac{a_k }{ b_n}\Big)^\alpha -\sum_{k=1}^n\left(\frac{a_k }{ b_n}\right)\right]\to 0.
	\end{align*}		
\end{proof}

\section{Exact Strong Laws} 
We start by proving  a result on the behaviour of the tails of $R_n$.
\begin{theorem} 
	\label{uniftails} Let $(R_n)_{n\geq 1}$ be as in (\ref{Rdef}) with  $F_n$ for which  there exist $\alpha >0$ and  $c>0$ such that
	
	\begin{equation}\label{uniformita}
	\lim_{t \to 0}\sup_n \left|\frac{F_n(t)}{t^\alpha}- c\right|=0.
	\end{equation}
	\begin{enumerate}
		\item[i.]Let $U_n\sim F_n$ and define $Y_n= \frac{1}{U_n}$ for every $n$.  Then
		\[
		\lim_{x\rightarrow\infty}\sup_{n}\left|\frac{P(R_n>x)}{P(Y_n>x)}-1\right|=0.
		\]
		\item[ii.]For every fixed $m$ \[\lim_{x\rightarrow\infty} \sup_{n}\left|\frac{P(R_n>x)}{P(R_m>x)}-1\right|=0.\]
	\end{enumerate} 
\end{theorem}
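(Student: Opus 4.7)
The plan is to sandwich $P(R_n>x)$ between two expressions of the form $F_n(\cdot)$ and exploit the uniformity hypothesis \eqref{uniformita} to show that the two bounds are asymptotic to the same quantity $c\,x^{-\alpha}$ uniformly in $n$. The key input is Corollary \ref{boundsforprob} (or its cleaner form, Corollary \ref{boundsforprob1}, when $\varphi_n\geq 1$), which already provides
\[
E\!\left[F_n\!\left(\tfrac{1}{x+A_n}\right)\right]\leq P(R_n>x)\leq F_n\!\left(\tfrac{1}{x}\right),
\]
together with the identity $P(Y_n>x)=F_n(1/x)$.

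For part (i), I would first unpack the uniformity hypothesis: given $\varepsilon>0$, there exists $\delta>0$ such that $(c-\varepsilon)t^\alpha\leq F_n(t)\leq (c+\varepsilon)t^\alpha$ for all $n$ and all $t\in(0,\delta)$. Taking $x>1/\delta$ and applying this at $t=1/x$ gives $P(Y_n>x)\in[(c-\varepsilon)x^{-\alpha},(c+\varepsilon)x^{-\alpha}]$ uniformly in $n$. Applying it at the slightly smaller argument arising inside the expectation (which is $\leq 1/x$ and, under $\varphi_n\geq 1$, is bounded below by $1/(x+1)$) yields a lower bound of the form $(c-\varepsilon)(x+1)^{-\alpha}$ for $P(R_n>x)$. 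Dividing the chain of inequalities through by $P(Y_n>x)$ gives
\[
\frac{c-\varepsilon}{c+\varepsilon}\Big(\tfrac{x}{x+1}\Big)^{\!\alpha}\leq \frac{P(R_n>x)}{P(Y_n>x)}\leq \frac{c+\varepsilon}{c-\varepsilon},
\]
uniformly in $n$; letting $x\to\infty$ and then $\varepsilon\to 0$ finishes part (i).

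For part (ii), I would write the telescoping factorisation
\[
\frac{P(R_n>x)}{P(R_m>x)}=\frac{P(R_n>x)}{P(Y_n>x)}\cdot\frac{F_n(1/x)}{F_m(1/x)}\cdot\frac{P(Y_m>x)}{P(R_m>x)}.
\]
The outer two factors converge to $1$ uniformly in $n$ by part (i) (the third one is even easier since $m$ is fixed). For the middle factor, the uniformity hypothesis applied separately to numerator and denominator gives $F_n(1/x)/F_m(1/x)\to c/c=1$ uniformly in $n$, so the product converges to $1$ uniformly.

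The main subtlety I expect is controlling the random shift $A_n=(\varphi_n(B_n)(1+Y_n))^{-1}$ inside the expectation in the lower bound. If $\varphi_n\geq 1$ one immediately has $A_n\in[0,1]$ and the argument above works verbatim; in the fully general case one needs to argue that the monotonicity of $F_n$ together with the uniform power-law behaviour is strong enough to replace $F_n(1/(x+A_n))$ by $F_n(1/x)$ up to a factor tending uniformly to $1$, which may force passing the expectation through the uniform estimate $F_n(t)=ct^\alpha(1+o(1))$ valid for $t$ small. This is really the only non-routine point; everything else is the sandwich plus an $\varepsilon$--$\delta$ manipulation of \eqref{uniformita}.
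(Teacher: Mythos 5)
Your proposal is correct and follows essentially the same route as the paper: sandwich $P(R_n>x)$ between $F_n\left(\frac{1}{x+1}\right)$ and $F_n\left(\frac{1}{x}\right)=P(Y_n>x)$ via Corollary \ref{boundsforprob1}, use the uniform $\varepsilon$--$\delta$ form of \eqref{uniformita} to force both bounds to agree asymptotically, and reduce part (ii) to the ratio $F_n(1/x)/F_m(1/x)$. The subtlety you flag about needing $\varphi_n\geq 1$ (so that $A_n\in[0,1]$) is genuine, but the paper's own proof relies on it implicitly in exactly the same way by invoking Corollary \ref{boundsforprob1}.
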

\begin{proof}
	The first part can be easily derived by using the inequalities described in Corollary \ref{boundsforprob1} i.e.
	$$\frac{ F_n \left(\frac{1}{x+ 1} \right) }{\left(\frac{1}{x+ 1}\right)^\alpha}\cdot \frac{x^\alpha}{(x+1)^\alpha}\cdot \frac{\left( \frac{1}{x}\right)^\alpha}{ F_n\left(\frac{1}{x}\right)}= \frac{  F_n\left(\frac{1}{x+ 1}\right) }{ F_n(\frac{1}{x})}  \leq   \frac{P(R_n>x)}{ P( Y_n> x)} \leq 1,$$
	and the result follows immediately from \eqref{uniformita}. 
	
	\noindent The second part of the Theorem follows easily since, by the first part it suffices to prove the same relation with $Y_n$ and $Y_m$ in place of $R_n$ and $R_m$ respectively. Then,
	
	$$\frac{P(Y_n>x)}{ P( Y_m> x)} = \frac{F_n\left(\frac{1}{x}\right)}{ F_m\left(\frac{1}{x}\right)}= \frac{F_n\left(\frac{1}{x}\right)}{ \left( \frac{1}{x}\right)^\alpha}\cdot \frac{ \left(\frac{1}{x}\right)^\alpha}{ F_m\left(\frac{1}{x}\right)}$$
	Let $\epsilon \in ( 0, c)$ be fixed. By assumption \eqref{uniformita} there exists $ \delta \in (0,1)$ such that, for every $t \in (0,\delta)$ we have
	$$ c-\epsilon   <\frac{ F_n(t)}{ t^\alpha}  <  c+\epsilon$$
	for every $n$. Therefore, for sufficiently large $x$
	$$\frac{c-\epsilon}{ c+\epsilon} < \frac{P(Y_n>x)}{ P( Y_m> x)} <\frac{c+\epsilon}{ c-\epsilon}.$$
	The desired result follows by the arbitrariness of $\epsilon$. 
\end{proof}

\begin{remark}
	The above result indicates  that the sequence $(R_n)_{n\geq 1}$ has uniformly equivalent tails to the tails of the random variable $Y_n$ and to the tails of every $R_m$ with $m$ fixed.
\end{remark}

\begin{remark}
	Assume that $F_n$ has a density $f_n$ for every $n$. Then a sufficient condition for \eqref{uniformita} is that there exist $\alpha >0$ and  $c>0$ such that
	
	$$\lim_{t \to 0}\sup_n \left|\frac{f_n(t)}{  t^{\alpha-1}}-   c\right|=0.$$
	Note that the latter is a generalization of the condition used in Theorems 2.2 and 2.3 of \cite{G2018}.
\end{remark}

\subsection{A strong law for the independence case}
It is important to mention that for the sequence $(R_n)_{n\geq 1}$ no dependence structure is assumed as this may vary depending on $\varphi_n$ and the choice of $y_n$. However, the result that follows provides a special case where the random variables $R_n$ are independent.

\begin{proposition}
	\label{indRn}Let $(R_n)_{n\geq 1}$ be as defined in (\ref{Rdef}) with $\varphi_n(h_n) = c_n,~\forall h$ and $y_n = y_n(h_1,\ldots,h_n) = d_n~\forall h_1,\ldots,h_n$. Then, the sequence $(R_n)_{n\geq 1}$ consists of independent random variables. 
\end{proposition}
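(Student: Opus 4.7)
The plan is to reduce the claim to the independence of the underlying integer-valued variables $(B_n)_{n\geq 1}$, and then to observe that, under the stated hypotheses, each $R_n$ collapses to a deterministic function of $B_{n+1}$ alone. Concretely, I will first show that the conditional mass function of $B_{n+1}$ given $(B_1,\ldots,B_n)$ depends only on $h_{n+1}$; independence of the $B_j$'s will then follow by a factorization argument, after which independence of the $R_n$'s can be read off directly from (\ref{Rdef}).

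For the first step, substituting $\varphi_n(h)\equiv c_n$ and $y_n(h_1,\ldots,h_n)\equiv d_n$ into the definition of $\delta_n$ yields
$$\delta_n(h_n, h_{n+1}, d_n)=\frac{c_n(1+d_n)}{h_{n+1}+c_n d_n},$$
which is a function $\tilde\delta_n(h_{n+1})$ of $h_{n+1}$ alone. Consequently $\alpha_n=\tilde\delta_n(h_{n+1}+1)$ and $\beta_n=\tilde\delta_n(h_{n+1})$ do not depend on $(h_1,\ldots,h_n)$, so
$$P\big(B_{n+1}=h_{n+1}\mid B_n=h_n,\ldots,B_1=h_1\big)=F_n(\tilde\delta_n(h_{n+1}))-F_n(\tilde\delta_n(h_{n+1}+1))=:g_n(h_{n+1}).$$
Moreover, the admissible range $h_{n+1}\geq \varphi_n(h_n)=c_n$ is itself free of $(h_1,\ldots,h_n)$, so the set $\mathcal{E}_{n+1}$ is a product set in $\mathbb{N}^*$. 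Iterating the chain rule then gives the factorization
$$P(B_1=h_1,\ldots,B_{n+1}=h_{n+1})=P(B_1=h_1)\prod_{j=1}^{n}g_{j}(h_{j+1}),$$
and, marginalizing successive coordinates using the fact that each $g_j$ sums to $1$ on $\{h\geq c_j\}$ (a consequence of (\ref{intevallo})), one identifies $P(B_1=h_1)$ and each $g_j(\cdot)$ as the genuine marginal mass functions of $B_1$ and $B_{j+1}$, respectively. This is precisely the definition of mutual independence of $B_1,B_2,\ldots$.

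For the second step I plug $\varphi_n(B_n)=c_n$ and $Y_n=y_n(B_1,\ldots,B_n)=d_n$ into (\ref{Rdef}) to obtain
$$R_n=\frac{B_{n+1}+c_n d_n}{c_n(1+d_n)},$$
a deterministic function of $B_{n+1}$ alone. Since $(B_{n+1})_{n\geq 1}$ are independent by the first step and the $R_n$'s are measurable functions of disjoint coordinates, independence of $(R_n)_{n\geq 1}$ follows at once. No genuine obstacle arises in the argument: the entire proof amounts to the observation that removing the dependence of $\varphi_n$ and $y_n$ on earlier coordinates makes both the transition law and the support of $B_{n+1}$ insensitive to the past, and (\ref{intevallo}) guarantees that the $g_n$ are bona fide probability mass functions so that the factorization above is a product of marginals.
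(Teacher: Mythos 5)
Your proof is correct, but it takes a genuinely different route from the paper's. The paper never touches the underlying variables $B_n$ directly: it invokes Lemma \ref{bdist} to write
$P(R_n>x,\,R_{n+1}>y)-P(R_n>x)P(R_{n+1}>y)=E\bigl[I(R_n\geq x)\{Z^{(y)}_{n+1}-E[Z^{(y)}_{n+1}]\}\bigr]$
with $Z^{(u)}_{n}=F_{n}\bigl(\varphi_{n}(B_{n})(1+Y_{n})/(S_{n}(u;B_1,\dots,B_{n})+\varphi_{n}(B_{n})Y_{n})\bigr)$, observes that under the hypotheses $Z^{(u)}_{n}$ is a deterministic constant in $\omega$, so the covariance vanishes, and then iterates the same telescoping for the $k$-dimensional joint survival functions. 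You instead show that the conditional mass function of $B_{n+1}$ given the past, together with its support $\{h_{n+1}\geq c_n\}$, is free of $(h_1,\dots,h_n)$, deduce mutual independence of $B_1,B_2,\dots$ by chain-rule factorization (the normalization of each $g_n$ being exactly the identity following \eqref{intevallo}), and then note that $R_n=(B_{n+1}+c_nd_n)/(c_n(1+d_n))$ is a measurable function of $B_{n+1}$ alone. Your argument yields a strictly stronger and more transparent conclusion (independence of the $B_j$'s themselves, and the identification of each $R_n$ as $\sigma(B_{n+1})$-measurable) and avoids the multivariate survival-function machinery; the paper's argument buys economy, reusing Lemma \ref{bdist} already established for the tail estimates, and verifies only the factorization of joint tails over rectangles, which suffices for independence since $R_n\geq 1$ almost surely. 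Both are complete proofs.
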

\begin{proof}
	By Lemma \ref{bdist} we have that
	\begin{align*}
	&P(R_n > x, R_{n+1}>y)-P(R_n > x)P( R_{n+1}>y)\\
	&= E\left[I(R_n\geq x)F_{n+1}\left(\frac{\phi_{n+1}(B_{n+1})(1+Y_{n+1})}{S_{n+1}(y;B_1, \dots, B_{n+1}) + \phi_{n+1}(B_{n+1}) Y_{n+1}}\right)\right]\\
	&~~-E\left[I(R_n\geq x)\right]E\left[ F_{n+1}\left(\frac{\phi_{n+1}(B_{n+1})(1+Y_{n+1})}{S_{n+1}(y;B_1, \dots, B_{n+1}) + \phi_{n+1}(B_{n+1}) Y_{n+1}}\right)\right].
	\end{align*}
	Put for simplicity
	$$ F_n\left(\frac{\phi_{n}(B_{n})(1+Y_{n})}{S_{n}(u;B_1, \dots, B_{n}) + \phi_{n}(B_{n}) Y_{n}}\right)=:Z^{(u)}_n. $$
	Then
	\begin{eqnarray*}
		P(R_n > x, R_{n+1}>y)-P(R_n > x)P( R_{n+1}>y)&=&E\big[I(R_n\geq x)Z^{(y)}_{n+1}\big]-E\big[1_{\{R_n\geq x\}}\big]E\big[ Z^{(y)}_{n+1}\big]\\
		&=&E\big[I(R_n\geq x)\big\{Z^{(y)}_{n+1}-E\big[ Z^{(y)}_{n+1}\big]\big\}\big].
	\end{eqnarray*}
	For the particular choices of  $\varphi_n$ and $y_n$ we have that 
	$$S_{n}(u;B_1, \dots, B_{n})= \lceil u\phi_n(B_n)+ (u-1)Y_n(B_1, \dots, B_n) \phi_n(B_n)\rceil = \lceil uc_n + (u-1)d_n c_n\rceil$$ therefore for every $\omega \in \Omega$
	$$Z^{(u)}_{n}(\omega)= F_n\left(\frac{c_n(1+d_n)}{ \lceil uc_n + (u-1)d_n c_n\rceil + c_n d_n }\right),$$
	i.e. $\omega \mapsto Z^{(u)}_{n}(\omega)$ is constant (in $\omega$), leading to
	$$Z^{(y)}_{n+1}-E\big[ Z^{(y)}_{n+1}\big]=0.$$
	As a consequence
	$$P(R_n > x, R_{n+1}>y)-P(R_n > x)P( R_{n+1}>y)=E\big[I(R_n\geq x)\big\{Z^{(y)}_{n+1}-E\big[ Z^{(y)}_{n+1}\big]\big\}\big]=0.$$
	By the same argument we can prove that in general for $x_i\geq 1,~i=0,1,\ldots,k$
	\[
	P(R_n>x_0, R_{n+1}>x_1,\ldots,R_{n+k}>x_k) = E(Z_{n+k}^{(x_{k})}I(R_n>x_0,\ldots,R_{n+k-1}>x_{k-1})).
	\]
	Therefore
	\begin{eqnarray*}
		&~&P(R_n>x_0, R_{n+1}>x_1,\ldots,R_{n+k}>x_k)-\prod_{i=0}^{k-1}P(R_{n+i}>x_i)P(R_{n+k}>x_k)\\
		&=&E(Z_{n+k}^{(x_{k})}I(R_n>x_0,\ldots,R_{n+k-1}>x_{k-1}))-EZ_{n+k}^{(x_{k})}\prod_{i=0}^{k-1}P(R_{n+i}>x_i)\\
		&=&Z_{n+k}^{(x_{k})}\left(P(R_n>x_0, R_{n+1}>x_1,\ldots,R_{n+k-1}>x_{k-1})-\prod_{i=0}^{k-1}P(R_{n+i}>x_i)\right)
	\end{eqnarray*}
	where the last equality is derived due to the fact that $Z_n^{(u)}$ is constant with respect to $\omega$. Continuing this pattern we will have 
	\begin{eqnarray*}
		&~&P(R_n>x_0, R_{n+1}>x_1,\ldots,R_{n+k}>x_k)-\prod_{i=0}^{k-1}P(R_{n+i}>x_i)P(R_{n+k}>x_k)\\
		&=&Z_{n+k}^{(x_{k})}\cdots Z_{n+2}^{(x_{2})}\left(P(R_n>x_0,R_{n+1}>x_1)-P(R_n>x_0)P(R_{n+1}>x_{1})\right)\\
		&=&0
	\end{eqnarray*}
	i.e. independence is established.
\end{proof}

\begin{remark}
	Note that the result presented above requires no assumptions for the distribution functions $F_n$. In the special case where $F_n=$uniform distribution on $[0,1]$, $\varphi_n\equiv 1$ and $y_n \equiv 0$ for every $n$, the construction of $R_n$ reduces to the well-known case of the Luroth series \cite {L1883} for which independence is known (see for example \cite {G1976}). 
\end{remark}

\noindent The exact strong law that follows is a direct consequence of Theorem 4.1 of \cite{AM2018}.

\begin{theorem}
	\label{exactstrind}Let $(R_n)_{n\geq 1}$ be as in Proposition \ref{indRn} and assume that the distribution functions $F_n$ satisfy \eqref{uniformita} with $\alpha =1$. Then for every $b>2$,
	\[
	\lim_{n\rightarrow \infty}\frac{1}{\log^bn}\sum_{k=1}^{n}\frac{\log^{b-2}k}{ k}R_k=\frac{1}{b}~~a.s.
	\]
\end{theorem}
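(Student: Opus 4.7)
The plan is to invoke Theorem 4.1 of \cite{AM2018}, an exact strong law for independent random variables with uniformly regularly-varying tails of index $-\alpha$; it suffices to verify its three hypotheses in our setting: independence of the summands, uniform asymptotic $xP(R_n>x)\to c$, and the weight/normalization conditions (including the limit of truncated first moments).

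First, independence of $(R_n)_{n\geq 1}$ is given directly by Proposition \ref{indRn}, since the present theorem sits inside its hypotheses (constant $\varphi_n$ and constant $y_n$). Second, the required uniform tail behaviour follows from \eqref{uniformita} with $\alpha=1$ together with Theorem \ref{uniftails}(i): for $Y_n=1/U_n$ we have $P(Y_n>x)=F_n(1/x)$, so uniformly $xP(Y_n>x)\to c$, and the tail equivalence
\[
\lim_{x\to\infty}\sup_n\left|\frac{P(R_n>x)}{P(Y_n>x)}-1\right|=0
\]
of Theorem \ref{uniftails}(i) transfers this asymptotic to the $R_n$.

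Third, the weights $a_k=\log^{b-2}k/k$ and the normalization $b_n=\log^b n$ satisfy $\sum_{k=1}^{n}a_k=O(\log^{b-1}n)=o(b_n)$ for every $b>2$, a routine estimate. The remaining hypothesis of \cite{AM2018} is the existence of a limit of truncated first moments, and this is essentially the same computation already carried out in the weak-law theorem with $F_n$ uniform from Section~3. Using the tail representation $E[X\,I(X\le T)]=\int_0^T P(X>x)\,dx-TP(X>T)$ together with $P(Y_k>x)\sim c/x$ uniformly in $k$, one obtains
\[
b_n^{-1}\sum_{k=1}^{n}a_k\,E\bigl[Y_k\,I(Y_k\le b_n/a_k)\bigr]\longrightarrow \frac{1}{b},
\]
and by the uniform tail equivalence the same limit holds with $R_k$ in place of $Y_k$.

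The main (mild) obstacle is this last transfer from $Y_k$ to $R_k$: one must check that uniform tail equivalence entails uniform asymptotic equivalence of $E[R_k\,I(R_k\le T)]$ and $E[Y_k\,I(Y_k\le T)]$. This follows from the identity above by splitting the tail integral at a suitable cutoff and using the $o(1/x)$ control provided by Theorem \ref{uniftails}(i) together with Corollary \ref{boundsforprob1}; the boundary term $TP(\cdot>T)$ is $O(1)$ in both cases. With all hypotheses of Theorem 4.1 of \cite{AM2018} verified, the advertised a.s.\ convergence $\log^{-b}n\sum_{k=1}^{n}(\log^{b-2}k/k)R_k\to 1/b$ follows at once.
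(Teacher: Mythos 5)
Your high-level plan (reduce everything to Theorem 4.1 of \cite{AM2018}) is the same as the paper's, and two of your verifications match: independence of $(R_n)$ via Proposition \ref{indRn} gives assumption (1.2) of that theorem, and the uniform tail control coming from \eqref{uniformita} together with Theorem \ref{uniftails} gives assumption (1.3) (the paper uses part (ii), uniform equivalence of the tails of $R_n$ to those of a fixed $R_m$, which is the form the cited theorem actually needs, rather than part (i)). The problem is the third block of your argument: the hypotheses you then verify --- $\sum_{k\le n}a_k=o(b_n)$ and convergence of $b_n^{-1}\sum_k a_k E[Y_k I(Y_k\le b_n/a_k)]$ --- are the hypotheses of Nakata's theorem (Theorem \ref{nakata} / Theorem \ref{WeakLaw1}), i.e.\ of the \emph{weak} law. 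They are not the remaining hypotheses of Theorem 4.1 of \cite{AM2018}, and on their own they only yield convergence in probability. Nothing in your proposal supplies the mechanism that upgrades this to almost sure convergence.

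Concretely, what the paper checks and you omit is: (a) that $L(x)=xF_m(1/x)$ (for $x\ge 1$) is slowly varying, which is condition (3.2) of \cite{AM2018}; and (b) the Borel--Cantelli-type summability condition $\sum_n P(R_m>c_n)<\infty$, where $c_n$ is defined implicitly by $c_n=n\bigl(\int_1^{c_n}P(Y_m>t)\,dt\bigr)\log(c_n+e)$. Under \eqref{uniformita} with $\alpha=1$ one gets $\int_1^{c_n}P(Y_m>t)\,dt\sim \log c_n$, hence $c_n\sim n\log^2 n$ and $\sum_n P(R_m>c_n)\le \sum_n F_m(1/c_n)\sim \sum_n (n\log^2 n)^{-1}<\infty$. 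This summability is precisely what controls the truncation errors almost surely; without it (or an equivalent variance-plus-Borel--Cantelli argument such as the one the paper develops separately in Section 4.2 for the dependent case), the claimed a.s.\ conclusion does not follow from the ingredients you list. The paragraph you devote to transferring truncated first moments from $Y_k$ to $R_k$ is therefore addressing a side issue while the step that makes the law \emph{strong} rather than weak is missing.
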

\begin{proof}
	For the proof, we check that the assumptions of Theorem 4.1 in \cite{AM2018} are satisfied. Since $(R_n)_{n\geq 1}$ is a sequence of independent random variables, Assumption (1.2) is satisfied. Assumption (1.3) is satisfied by Theorem \ref{uniftails} (b). Let $m$ be the integer considered in Theorem \ref{uniftails} (b); then the assumption \eqref{uniformita}
	ensures that
	\[
	L(x) = \begin{cases}
	x  F_m(1/x) & x\geq 1\\
	x  & x<1
	\end{cases}
	\] 
	is a slowly varying function and therefore the expression (3.2) in \cite{AM2018} is also verified. Following the notation of \cite{AM2018} the sequence $c_n$ is defined as 
	\[
	c_n = n\left(\int_{1}^{c_n}P(Y_m>t){\rm d}t\right)\log(c_n+e).
	\]
	Observe that by condition \eqref{uniformita} $EY_m = \infty$ and by employing condition \eqref{uniformita} again  it can be easily verified that
	\[
	\int_{1}^{c_n}P(Y_m>t){\rm d}t\sim \log c_n, 
	\]
	which leads to the conclusion that $c_n \sim n \log^2 n$ (since $c_n$ goes to $\infty$, as remarked in \cite{AM2018}, p. 109). Then
	$$\sum_n P(R_m>c_n)\leq \sum_n F_m\left(\frac{1}{c_n}\right)\sim \sum_{n} \frac{1}{n \log ^2 n} <\infty$$ 
	where the first inequality follows by Lemma \ref{udist} while the equivalence is obtained by condition \eqref{uniformita}. Thus, the result follows immediately.
\end{proof}

\begin{remark} 
	It is still an open question to find more general conditions than independence (if any) under which  the result of Theorem \ref{exactstrind} holds. 
\end{remark}

\subsection{A strong law in the general case}
Throughout this section, $c_n = n\log^bn$ for $b\geq 2$ and the sequence of functions denoted by $g_n:[1, + \infty)\rightarrow \mathbb{R}$ will be of the form
\begin{equation*}
g_n(x) =-c_nI(x<-c_n)+x I(|x|\leq c_n)+c_n I (x>c_n).
\end{equation*}

\noindent Before stating the main result of this section, we first present some useful lemmas. As it has already been mentioned in the introduction, the symbol $C$ appearing throughout may represent different constant every time.

\begin{lemma}
	\label{cov}Let $(R_n)_{n\geq 1}$ be as in (\ref{Rdef}) with $\varphi_n\geq 1$ and $F_n$ such that conditions (\ref{finitesup}) and (\ref{uniformita}) for $\alpha =1$ are satisfied. Define $W_n = \frac{1}{n}g_n(R_n)$. Then for $i \neq j$
	\[\left|\Cov (W_i, W_j)\right|  \leq \frac{C}{ij}\left(\log i+ \log j\right),
	\]
	where $C$ is a positive constant.
\end{lemma}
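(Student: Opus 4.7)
The plan is to express the covariance of $W_i = g_i(R_i)/i$ and $W_j = g_j(R_j)/j$ as a double integral of a difference of joint and product tail probabilities, and then apply Proposition \ref{dependence} together with the uniformity condition \eqref{uniformita} to control that integral.

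First I would note that since $R_n \geq 1$, the truncation simplifies to $g_n(R_n)=\min(R_n,c_n)$, which takes values in $[1,c_n]$. For nonnegative random variables the identity $E[XY]=\int_0^\infty\!\int_0^\infty P(X>s,Y>t)\,ds\,dt$ (and likewise $E[X]=\int_0^\infty P(X>s)\,ds$) gives
\begin{equation*}
\Cov\bigl(g_i(R_i),g_j(R_j)\bigr)=\int_0^{c_i}\!\!\int_0^{c_j}\!\Bigl[P(R_i>s,R_j>t)-P(R_i>s)P(R_j>t)\Bigr]ds\,dt,
\end{equation*}
since $P(\min(R_k,c_k)>u)=P(R_k>u)$ for $u<c_k$ and $=0$ for $u\geq c_k$.

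Next I would cut out the strips where one of the variables is below $1$. Because $R_k\geq 1$, for $s<1$ we have $P(R_i>s)=1$ and $P(R_i>s,R_j>t)=P(R_j>t)$, so the integrand vanishes there; similarly for $t<1$. Hence the integration effectively runs over $[1,c_i]\times[1,c_j]$, where Proposition \ref{dependence} applies and bounds the integrand by
\begin{equation*}
M\Bigl[F_i(1/s)\,t^{-2}+F_j(1/t)\,s^{-2}\Bigr].
\end{equation*}
Splitting the resulting integral into two products, the factors $\int_1^{c_k}s^{-2}\,ds\leq 1$ are trivial.

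The crux is therefore the bound on $\int_1^{c_k}F_k(1/s)\,ds$. Here I would exploit the uniformity hypothesis \eqref{uniformita} with $\alpha=1$: there exist $x_0\geq 1$ and a constant $C'$ such that $F_k(1/s)\leq C'/s$ for every $k$ and every $s\geq x_0$. Splitting the integral at $x_0$ gives
\begin{equation*}
\int_1^{c_k}F_k(1/s)\,ds\leq (x_0-1)+C'\log(c_k/x_0)\leq C\log c_k=C\bigl(\log k+b\log\log k\bigr)\leq C\log k,
\end{equation*}
for $k$ large enough (absorbing the small-$k$ cases into the constant, noting that $W_1=0$ since $c_1=0$ so the case $i=1$ or $j=1$ is trivial). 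Combining the pieces yields $|\Cov(g_i(R_i),g_j(R_j))|\leq C(\log i+\log j)$, and dividing by $ij$ gives the claimed estimate.

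The main delicate point is making sure that the $s<1$ or $t<1$ regions do not contribute, which is where we use $R_n\geq 1$ in an essential way; the rest is routine integration together with the uniform Lipschitz and asymptotic bounds on $F_n$ near $0$.
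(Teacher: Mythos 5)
Your proposal is correct and follows essentially the same route as the paper: the tail-probability representation of the covariance, reduction of the integration to $[1,c_i]\times[1,c_j]$ using $R_n\geq 1$, Proposition \ref{dependence} to bound the integrand, and the estimate $\int_1^{c_k}F_k(1/s)\,ds=O(\log c_k)=O(\log k)$ from condition \eqref{uniformita}. The only cosmetic differences are that you observe the integrand vanishes on the strips $s<1$ or $t<1$ rather than showing the strip contributions cancel between $E[XY]$ and $E[X]E[Y]$, and you content yourself with a one-sided bound where the paper derives the asymptotic equivalence $\int_{1/c_i}^1 F_i(t)t^{-2}\,dt\sim c\log c_i$.
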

\begin{proof}
	Note that
	\[
	\Cov (W_i, W_j) = \frac{1}{ij}Cov (g_i(R_i),g_j(R_j)).
	\]
	First, consider the case where $c_i \geq 1$ for all $i$. By using the definition of the sequence $c_n$ we have that
	\begin{equation}
	\label{transf}P\big(g_i(R_i)>u, g_j(R_j)>v\big)= \begin{cases}
	P(R_i>u, R_j >v) & u < c_i, v< c_j\\ 0 & \rm otherwise.
	\end{cases}
	\end{equation}
	It is known that for any positive random variables $X$ and $Y$ (not necessarily absolutely continuous)
	\[
	E[X]= \int_0^\infty P(X>x) {\rm d} x; \qquad  E[XY]= \int_0^\infty\int_0^\infty P(X>x, Y>y) {\rm d} x{\rm d} y
	\]
	and by observing that $R_n\geq 1$ we have that
	\begin{align*}&
	E[g_i(R_i)g_j(R_j)]= \int_0^\infty\int_0^\infty P( g_i(R_i)>x,  g_j(R_j)>y){\rm d} x{\rm d} y\\&= \int_0^1{\rm d} x\int_0^1 1 {\rm d} y+ \int_0^1{\rm d} x\int_1^\infty {\rm d} y P(g_j(R_j)>y)
	+ \int_1^\infty{\rm d} x \int_0^1{\rm d} yP(g_i(R_i)>x)\\&+ \int_1^\infty{\rm d} x \int_1^\infty{\rm d} yP( g_i(R_i)>x,  g_j(R_j)>y)\\&= 1+ \int_1^{c_j}P(R_j > y){\rm d} y
	+ \int_1^{c_i}P(R_i > x){\rm d} x+ \int_1^ {c_i}{\rm d} x \int_1^{c_j}{\rm d} yP(R_i > x, R_j >y),\end{align*}
	where in the last equality we have used the expression obtained in (\ref{transf}). Similarly,
	\begin{align*}
	& E[g_i(R_i)]E[g_j(R_j)] \\&= 1+ \int_1^{c_j}P(R_j > y){\rm d} y
	+ \int_1^{c_i}P(R_i > x){\rm d} x+ \int_1^ {c_i}{\rm d} x \int_1^{c_j}{\rm d} yP(R_i > x)P( R_j >y).
	\end{align*}
	Thus,
	\begin{equation*}\label{intermedia}
	\Cov(g_i(R_i),g_j(R_j))=  \int_1^ {c_i}{\rm d} x \int_1^{c_j}{\rm d} y \big\{P(R_i > x, R_j >y)-P(R_i > x)P( R_j >y)\big\}.
	\end{equation*}
	By applying Proposition \ref{dependence} we have that 
	\begin{align*} 
	&\big|\Cov(g_i(R_i),g_j(R_j))\big| \leq M\int_1^ {c_i}{\rm d} x \int_1^{c_j}{\rm d} y \left[
	F_i\left(\frac{1}{x}\right) \frac{1}{y^2}+ F_j\left(\frac{1}{y}\right) \frac{1}{x^2}\right]\\ 
	& =  M\left[\int_1^ {c_i}{\rm d} xF_i\left(\frac{1}{x}\right)\int_1^{c_j}{\rm d} y \frac{1}{y^2}+\int_1^ {c_i}{\rm d} x\frac{1}{x^2}\int_1^{c_j}{\rm d} y  F_j\left(\frac{1}{y}\right)
	\right].
	\end{align*}
	A change of variable leads to
	\[
	\int_1^ {c_i}{\rm d} xF_i\left(\frac{1}{x}\right)= 
	\int_\frac{1}{c_i}^ 1 \frac{F_i(t)}{t^2} {\rm d}t
	\sim c \log i, \qquad \hbox{as } i \to \infty.
	\]
	The last equivalence is proven as follows. By using condition \eqref{uniformita} and for fixed $\epsilon >0$, let $\delta\in (0,1)$ be such that 
	$$c- \epsilon\leq \frac{F_i(t)}{t} < c+ \epsilon$$ for $0<t<\delta$, and let $i_0$ be sufficiently large in order that $\frac{1}{c_i}< \delta$ for every $i>i_0$. Then
	$$\int_\frac{1}{c_i}^\delta \frac{c-\epsilon}{t} {\rm d}t< \int_\frac{1}{c_i}^\delta \frac{F_i(t)}{t^2} {\rm d}t < \int_\frac{1}{c_i}^\delta \frac{c+\epsilon}{t} {\rm d} t,$$
	which amounts to
	$$(c-\epsilon) \log \delta +(c-\epsilon) \log c_i < \int_\frac{1}{c_i}^\delta \frac{F_i(t)}{t^2} {\rm d}t < (c+\epsilon) \log \delta +(c+\epsilon) \log c_i,$$ 
	where the arbitrariness of $\epsilon$ implies that
	$$\int_\frac{1}{c_i}^\delta \frac{F_i(t)}{t^2} {\rm d}t \sim c \log c_i, \qquad i \to \infty.$$
	Since
	$$\frac{ \int_ \delta^1 \frac{F_i(t)}{t^2}}{ \log c_i}\leq \frac{ C\int_ \delta^1    \frac{1}{t} {\rm d} t}{ \log c_i}\to 0, \qquad i \to \infty,$$
	we conclude that
	$$\int_\frac{1}{c_i}^1 \frac{F_i(t)}{t^2}\sim c \log c_i\sim c \log i, \qquad i \to \infty. $$ 
	Observe that
	\begin{align*}&\int_1^{c_j}{\rm d} y \frac{1}{y^2}=  1- \frac{1}{ c_j} \leq 1. \end{align*}
	Therefore
	\begin{align*} &\big|\Cov(g_i(R_i),g_j(R_j))\big| \leq C\big(\log i+ \log j\big),\end{align*}
	and as a consequence
	$$\big|\Cov (W_i, W_j)\big| = \frac{1}{ij}\big|\Cov (g_i(R_i),g_j(R_j))\big|\leq \frac{C}{ij}\big(\log i+ \log j\big),  $$ as claimed. In the case where $c_i<1$ for some $i$ it can be easily proven that $\big|\Cov (W_i, W_j)\big|=0$ for every $j$, which again is compatible with the desired result.
\end{proof}

\begin{lemma}
	\label{varbound}Under the assumptions of Lemma \ref{cov} and for $j=1,2,\ldots$
	\[
	\Var g_j(R_j)\leq Cc_j
	\]
	where $C$ is a positive constant.
\end{lemma}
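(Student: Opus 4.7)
Since $R_j \geq 1$ by construction, the random variable $g_j(R_j)$ takes values in $[1, c_j]$ (in particular it is nonnegative) and the indicator $I(R_j < -c_j)$ plays no role. Hence it suffices to bound the second moment: $\Var g_j(R_j) \leq E[g_j(R_j)^2]$. The plan is to apply the layer-cake formula to express this second moment as an integral of $P(R_j > x)$, then insert the upper bound from Corollary \ref{boundsforprob1}, and finally use the uniformity in (\ref{uniformita}) to control $F_j(1/x)$ uniformly in $j$.

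For the first step, using $E[Z^2] = \int_0^\infty 2x\, P(Z>x)\, dx$ for nonnegative $Z$, together with the observation that $P(g_j(R_j) > x) = P(R_j > x)$ for $x < c_j$ and $0$ otherwise, I would write
\[
E[g_j(R_j)^2] = \int_0^{c_j} 2x\, P(R_j>x)\, dx = \int_0^1 2x\, dx + \int_1^{c_j} 2x\, P(R_j>x)\, dx.
\]
The first piece is bounded by $1$, and for the second piece Corollary \ref{boundsforprob1} gives $P(R_j>x) \leq F_j(1/x)$.

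The remaining step is to show that $F_j(1/x) \leq C/x$ for every $x \geq 1$, with $C$ independent of $j$. By (\ref{uniformita}) with $\alpha = 1$, there exists $\delta \in (0,1)$ such that $F_n(t) \leq (c+1) t$ for all $t \in (0,\delta]$ and every $n$; for $t \in (\delta, 1]$ we use the trivial bound $F_n(t) \leq 1 \leq t/\delta$. Taking $M = \max(c+1, 1/\delta)$ yields $F_j(t) \leq M t$ on $(0,1]$ uniformly in $j$, so $F_j(1/x) \leq M/x$ for $x \geq 1$. Substituting,
\[
\int_1^{c_j} 2x\, P(R_j>x)\, dx \leq \int_1^{c_j} 2M\, dx \leq 2M c_j,
\]
and combining with the earlier contribution gives $E[g_j(R_j)^2] \leq C c_j$ (absorbing the constant $1$ into $C c_j$ when $c_j \geq 1$; the case $c_j < 1$ is handled separately, since then $g_j(R_j)$ is supported in $[1,c_j]$ only if $c_j \geq 1$, and otherwise the variance is $0$ as noted at the end of Lemma \ref{cov}). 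I do not anticipate a real obstacle; the only subtle point is the uniform passage from the small-$t$ asymptotic in (\ref{uniformita}) to a global linear bound on $F_j$ over $(0,1]$, which is resolved by the trivial tail bound $F_j \leq 1$.
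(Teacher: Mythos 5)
Your proof is correct and follows essentially the same route as the paper's: bound the variance by the second moment, write it via the layer--cake formula as $1+\int_1^{c_j}2x\,P(R_j>x)\,dx$, insert $P(R_j>x)\le F_j(1/x)$, and use \eqref{uniformita} to get the uniform linear bound $F_j(1/x)\le M/x$. Your explicit derivation of that uniform bound (splitting $(0,\delta]$ from $(\delta,1]$) is a detail the paper leaves implicit, but the argument is the same.
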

\begin{proof}
	For $c_j\geq 1$, by Lemma \ref{udist},
	\begin{align*}&
	\Var g_j(R_j)\leq E[g_j^2 (R_j)]= \int_0^\infty P(g_j^2(R_j) > x){\rm d} x = \int_0^1 1 {\rm d} x+ \int_1^{c_j^2}  P(R^2_j> x) {\rm d} x\\
	&=   1 + \int_1^{c_j^2}  P(R _j> \sqrt x) {\rm d} x=   
	1 + \int_1^{c_j } 2t P(R _j> t) {\rm d} t\leq 1 + \int_1^{c_j } 2t F_j\left(\frac{1}{t}\right) {\rm d} t \leq Cc_j
	\end{align*}
	where the last inequality follows because of condition \eqref{uniformita}. It can easily be proven that for the cases where $c_j<1$ for some $j$, the statement is still valid as $\Var g_j(R_j)\leq c_j$.
\end{proof}

\begin{lemma}
	\label{boundsVarCov} Under the assumptions of Lemma \ref{cov}, 
	\begin{itemize}
		\item[i.] $$\sum_{j=1}^n \Var W_j \leq C \log^{b+1} n,~b\geq 2 \mbox{ for } j=1,2,\ldots,$$
		\item[ii.] $$\sum_{1 \leq i< j\leq n}\big|\Cov (W_i, W_j)\big| \leq C \log^3 n \mbox{ for } i\neq j.$$
	\end{itemize}
\end{lemma}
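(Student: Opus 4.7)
The plan is to reduce both bounds to routine sums of the form $\sum \log^k j / j$ by plugging in the estimates already obtained in Lemmas \ref{cov} and \ref{varbound}, together with the explicit choice $c_j = j\log^b j$.

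For part (i), I would start from Lemma \ref{varbound}, which gives $\Var g_j(R_j) \leq C c_j$. Since $W_j = g_j(R_j)/j$, we have $\Var W_j \leq C c_j / j^2 = C \log^b j / j$. The claim then reduces to the asymptotic
\[
\sum_{j=1}^n \frac{\log^b j}{j} \sim \frac{\log^{b+1} n}{b+1}, \qquad n \to \infty,
\]
which is standard (compare with $\int_1^n (\log x)^b / x \, dx$). In particular this sum is bounded above by $C \log^{b+1} n$, giving (i).

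For part (ii), I would use Lemma \ref{cov} directly: $|\Cov(W_i, W_j)| \leq C(\log i + \log j)/(ij)$. The double sum then splits as
\[
\sum_{1 \leq i < j \leq n} \frac{\log i + \log j}{ij} = \sum_{1 \leq i < j \leq n}\frac{\log i}{ij} + \sum_{1 \leq i < j \leq n}\frac{\log j}{ij}.
\]
For the second term, summing first over $i$ and using $\sum_{i=1}^{j-1} 1/i \leq C \log j$ yields
\[
\sum_{1 \leq i < j \leq n}\frac{\log j}{ij} \leq C\sum_{j=1}^n \frac{\log^2 j}{j} \leq C \log^3 n.
\]
For the first, summing over $j$ gives $\sum_{j=i+1}^n 1/j \leq \log n$, so
\[
\sum_{1 \leq i < j \leq n}\frac{\log i}{ij} \leq \log n \sum_{i=1}^n \frac{\log i}{i} \leq C\log n \cdot \log^2 n = C\log^3 n,
\]
finishing (ii).

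There is no real obstacle here; both statements are immediate consequences of the preceding bounds for $\Var g_j(R_j)$ and $\Cov(g_i(R_i), g_j(R_j))$ combined with elementary integral-comparison estimates for $\sum (\log j)^k/j$. The only point requiring a bit of care is making sure the constants $C$ are absorbed uniformly (they depend on $b$ and on $M$ and $c$ from \eqref{finitesup}–\eqref{uniformita}, but not on $n$), and handling the harmless finitely many initial indices where $c_j < 1$ exactly as in Lemmas \ref{cov} and \ref{varbound}.
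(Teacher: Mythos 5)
Your proof is correct and follows essentially the same route as the paper: part (i) from Lemma \ref{varbound} via $\sum_j \log^b j/j = O(\log^{b+1} n)$, and part (ii) from Lemma \ref{cov} via the double sum $\sum_{i<j}(\log i+\log j)/(ij) = O(\log^3 n)$. The only difference is cosmetic: the paper justifies these elementary sum asymptotics by an appeal to the Ces\`aro theorem, while you carry out the summation-by-parts/integral-comparison estimates explicitly, which is if anything more transparent.
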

\begin{proof}
	The first inequality can be easily derived from Lemma \ref{varbound}. In detail, 
	\begin{align*}
	\sum_{j=1}^n \Var W_j \leq  C  \sum_{j=1}^n \frac{1}{j^2}c_j= C  \sum_{j=1}^n \frac{1}{j }\log^b j< C\log^{b+1}n, \qquad n \to \infty,
	\end{align*}
	where the last equivalence follows from Cesaro Theorem.
	
	\noindent The key result for obtaining the second inequality is Lemma \ref{cov} i.e.
	\[
	\sum_{1 \leq i< j\leq n}\big|\Cov (W_i, W_j)\big| \leq  C\sum_{j=2}^n \sum_{i=1}^{j-1} \frac{1}{ij}\big(\log i+ \log j\big)\sim    C\log^3 n , \qquad n \to \infty,
	\]
	where again the last equivalence follows from Cesaro Theorem (applied twice).
\end{proof}

\noindent The result that follows is instrumental for obtaining a strong law of large numbers. 

\begin{theorem}
	\label{A1}Under the conditions of Lemma \ref{cov} and for $d_n = n^\gamma$ with $\gamma >\frac{1}{2}$,
	\[
	\lim_n \frac{1}{d_n}\sum_{k=1}^n\Big\{\frac{1}{k}\big(g_k(R_k)-E[g_k(R_k)]\big)\Big\}= 0\quad \mbox{a.s.}.
	\]
\end{theorem}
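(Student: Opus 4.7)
The plan is to apply Chebyshev's inequality together with the first Borel--Cantelli lemma to the centred partial sums. Set
$S_n := \sum_{k=1}^n (W_k-E[W_k])=\sum_{k=1}^n \frac{1}{k}\bigl(g_k(R_k)-E[g_k(R_k)]\bigr)$,
so the claim is equivalent to $S_n/d_n\to 0$ almost surely.

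First I would compute $\Var S_n$ by bilinearity:
$\Var S_n=\sum_{k=1}^n \Var W_k+2\sum_{1\leq i<j\leq n}\Cov(W_i,W_j)$.
Lemma \ref{boundsVarCov} bounds the first sum by $C\log^{b+1}n$ and the second, in absolute value, by $C\log^3 n$. Since $b\geq 2$, the exponent $b+1\geq 3$ dominates, hence $\Var S_n\leq C\log^{b+1}n$. Note that this uses precisely the covariance estimate of Lemma \ref{cov} (and therefore the hypotheses \eqref{finitesup} and \eqref{uniformita} on the $F_n$), not any independence of the $R_n$.

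Next, Chebyshev's inequality gives, for any $\epsilon>0$,
$P(|S_n|/d_n>\epsilon)\leq \Var S_n/(\epsilon^2 d_n^2)\leq C\log^{b+1}n/(\epsilon^2\, n^{2\gamma})$.
Because $2\gamma>1$, this bound is summable in $n$ (any fixed power of $\log n$ is negligible against $n^{2\gamma-1}$). The first Borel--Cantelli lemma then yields $P(|S_n|/d_n>\epsilon \text{ i.o.})=0$, so $\limsup_n |S_n|/d_n\leq \epsilon$ almost surely. Intersecting the resulting a.s.\ events over a countable sequence $\epsilon=1/m\to 0$ gives $S_n/d_n\to 0$ a.s.

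I do not foresee a substantive obstacle once Lemma \ref{boundsVarCov} is in hand: the argument is a direct second-moment estimate combined with Borel--Cantelli, and the hypothesis $\gamma>\frac{1}{2}$ is used precisely to ensure summability of the Chebyshev bound at the same integer indices where the limit is taken. No subsequence, monotonization, or maximal inequality is required, because the tail bound applies at each $n$ individually.
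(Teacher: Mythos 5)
Your proof is correct and follows essentially the same route as the paper's: Chebyshev's inequality applied to $\Var S_n$, bounded via Lemma \ref{boundsVarCov} by $C(\log^{b+1}n+\log^3 n)$, gives a summable tail bound since $2\gamma>1$, and the first Borel--Cantelli lemma concludes. The only cosmetic difference is that you spell out the final intersection over $\epsilon=1/m$, which the paper leaves implicit.
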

\begin{proof}
	Let $S_n = \sum_{j=1}^{n}W_j = \sum_{j=1}^{n}\frac {1}{j} g_j(R_j)$. It is sufficient to prove that for every $\epsilon>0$,
	\begin{equation}
	\label{completeconv}\sum_{n}P\left(\frac{1}{d_n}|S_n-ES_n|>\epsilon\right)<\infty.
	\end{equation}
	Then the desired result follows immediately by applying the Borel-Cantelli lemma. By Chebychev inequality 
	\[
	\sum_{n}P\left(\frac{1}{d_n}|S_n-ES_n|>\epsilon\right)\leq \frac{1}{\epsilon^2}\sum_{n}\frac{\Var S_n}{d_n^2},
	\]
	so it is sufficient to prove that
	\[
	\sum_{n}\frac{\Var S_n}{d_n^2}<\infty.
	\]
	Observe that by Lemma \ref{boundsVarCov}
	\begin{align*}
	\Var S_{n } = \sum_{j=1}^ {n } \Var W_j + 2 \sum_{1 \leq i < j \leq n } \Cov (W_i, W_j) \leq C \log^{b+1} n+C\log^3 n
	\end{align*}
	where $C$ are positive constants.
	Hence
	$$\sum_n \frac{\Var S_{n  }}{d^2_{n }}\leq \sum_n \frac{ C\log^{b+1}  n}{n^{2\gamma}}+\sum_n \frac{ C\log^3  n}{n^{2\gamma}}  < \infty.$$
\end{proof}

\begin{remark}
	It is important to mention that the result described above also proves complete convergence for the sequence $\{S_n,n\geq 1\}$ due to (\ref{completeconv}).
\end{remark}

\noindent The main result of the section is presented below. 

\begin{theorem}
	\label{genstronglaw}Let $(R_n)_{n\geq 1}$ be as in (\ref{Rdef}) with   $F_n$ satisfying conditions \eqref{finitesup} and \eqref{uniformita} for $\alpha =1$. Then, for $d_n =n^{\gamma}$ with $\gamma>1$,
	\[
	\frac{1}{d_n}\sum_{k=1}^{n}\frac{R_k}{k} \rightarrow 0,~a.s.
	\]
\end{theorem}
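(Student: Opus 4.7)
The plan is to handle $\sum_{k=1}^n R_k/k$ by the standard truncation decomposition relative to the level $c_k = k\log^b k$. Writing
\[
\frac{1}{d_n}\sum_{k=1}^n\frac{R_k}{k}
=\frac{1}{d_n}\sum_{k=1}^n\frac{g_k(R_k)-E[g_k(R_k)]}{k}
+\frac{1}{d_n}\sum_{k=1}^n\frac{E[g_k(R_k)]}{k}
+\frac{1}{d_n}\sum_{k=1}^n\frac{R_k-g_k(R_k)}{k},
\]
I would show each of the three terms tends to zero almost surely. The first term is precisely what Theorem \ref{A1} gives for free (since $\gamma>1>1/2$), so nothing further is needed there.

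For the centered expectation term, I would bound $E[g_k(R_k)]$ by using $R_k\geq 1$ and Corollary \ref{boundsforprob1} together with the uniformity condition \eqref{uniformita} (with $\alpha=1$): $P(R_k>x)\leq F_k(1/x)\leq C/x$ for $x\geq 1$, so
\[
E[g_k(R_k)]=1+\int_1^{c_k}P(R_k>x)\,dx\leq 1+C\log c_k\leq C'\log k.
\]
Therefore $\sum_{k=1}^n E[g_k(R_k)]/k=O(\log^2 n)$, which divided by $d_n=n^\gamma$ with $\gamma>1$ goes to zero.

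For the tail term $\sum_{k=1}^n (R_k-g_k(R_k))/k$, I would show that $R_k\leq c_k$ eventually almost surely, making the tail sum bounded by a finite random variable, which trivially vanishes after division by $d_n\to\infty$. By Corollary \ref{boundsforprob1} and \eqref{uniformita},
\[
P(R_k>c_k)\leq F_k\!\left(\tfrac{1}{c_k}\right)\leq \frac{C}{c_k}=\frac{C}{k\log^b k},
\]
and since $b\geq 2>1$, the series $\sum_k P(R_k>c_k)$ converges; Borel--Cantelli then yields $R_k\leq c_k$ for all sufficiently large $k$ a.s., so $R_k-g_k(R_k)=0$ eventually.

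The main obstacle I expect is not any single step in isolation but rather the bookkeeping that ties the choice of truncation level $c_k=k\log^b k$ to all three requirements simultaneously: $c_k$ must be small enough that $\sum P(R_k>c_k)<\infty$ (forcing $b>1$, which is why the section assumes $b\geq 2$), yet large enough that the variance estimates feeding into Theorem \ref{A1} survive (which is where Lemma \ref{boundsVarCov} uses $b\geq 2$). All the heavy analytic work---the covariance bound via Proposition \ref{dependence}, the variance bound via Lemma \ref{varbound}, and the complete-convergence argument---has already been done in the lemmas leading up to Theorem \ref{A1}, so once the decomposition above is in place the proof is essentially a matter of assembling the ingredients.
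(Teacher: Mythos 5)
Your proof is correct, and your decomposition is in substance the same as the paper's: the paper splits $\frac{1}{d_n}\sum_{k\le n}R_k/k$ into $A_1+A_2+A_3+A_4$, where $A_1$ is your centered term (handled by Theorem \ref{A1}), $A_2+A_3$ regroup into your tail term $\sum(R_k-g_k(R_k))/k$ plus the compensator $\sum \frac{c_k}{k}P(R_k>c_k)$, and $A_4=\frac{1}{d_n}\sum\frac{1}{k}ER_kI(R_k\le c_k)$ together with that compensator is exactly your $\frac{1}{d_n}\sum E[g_k(R_k)]/k$. Where you genuinely diverge is in the treatment of the expectation term: the paper invokes Lemma 4.5 of \cite{AM2018} to get $ER_kI(R_k\le c_k)\sim\mu(c_k)$ and then uses the crude bound $\mu(c_k)\le c_k=k\log^b k$, which yields $\frac{1}{n^\gamma}\sum_{k\le n}\log^b k\sim n^{1-\gamma}\log^b n$ and is the \emph{only} place the hypothesis $\gamma>1$ is used. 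Your direct estimate $E[g_k(R_k)]\le 1+\int_1^{c_k}P(R_k>x)\,dx\le C\log k$ (valid since \eqref{uniformita} with $\alpha=1$ gives $F_k(t)\le Ct$ uniformly on $(0,1]$, hence $P(R_k>x)\le C/x$) is sharper and self-contained: it gives $O(\log^2 n/n^\gamma)$, so your argument actually establishes the conclusion for every $\gamma>1/2$ (the constraint inherited from Theorem \ref{A1}), of which the stated $\gamma>1$ is a special case. The only cosmetic point to tidy is the finitely many indices with $c_k<1$ (e.g.\ $c_1=0$), for which $E[g_k(R_k)]\le c_k$ trivially; the paper's lemmas handle this edge case the same way.
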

\begin{proof}
	First for a fixed integer $m$, define the random variable $Y_m$ to be $Y_m:= \frac{1}{U_m}$ where $U_m\sim F_m(x)$. Therefore, by \eqref{uniformita}
	\begin{equation}
	\label{cond1}\sum_n P(Y_m>c_n) = \sum_{n}F_m\left(\frac{1}{c_n}\right) <\infty.
	\end{equation}
	Motivated by the proof of Theorem 4.1 of \cite{AM2018} we can write
	\begin{eqnarray*}
		\frac{1}{d_n}\sum_{k=1}^{n}\frac{R_k}{k}&=&\frac{1}{d_n}\sum_{k=1}^{n}\frac{1}{k}(g_k(R_k)-Eg_k(R_k))\\
		&+&\frac{1}{d_n}\sum_{k=1}^{n}\frac{R_k}{k}I(R_k>c_k)+\frac{1}{d_n}\sum_{k=1}^{n}\frac{c_k}{k}I(R_k<-c_k)-\frac{1}{d_n}\sum_{k=1}^{n}\frac{c_k}{k}I(R_k>c_k)\\
		&+&\frac{1}{d_n}\sum_{k=1}^{n}\frac{c_k}{k}P(R_k>c_k)-\frac{1}{d_n}\sum_{k=1}^{n}\frac{c_k}{k}I(R_k<-c_k)\\
		&+&\frac{1}{d_n}\sum_{k=1}^{n}\frac{1}{k}ER_kI(R_k\leq c_k)\\
		&:=&A_1+A_2+A_3+A_4.
	\end{eqnarray*}
	By Theorem \ref{A1}, $A_1$ tends to zero almost surely. By Lemma 3.4 of \cite{AM2018} and since (\ref{cond1}) is satisfied, $\sum_{n}P(R_n>c_n)<\infty$. Then, the first Borel-Cantelli Lemma ensures that $A_2\rightarrow 0$ almost surely as $n\rightarrow\infty$. Condition (\ref{cond1}) and Kronecker's lemma lead to $A_3 \rightarrow 0$ almost surely. By Lemma 4.5 of \cite{AM2018} we have that 
	\[
	\lim_{n\rightarrow \infty}\frac{ER_nI(R_n\leq c_n)}{\mu(c_n)} = 1
	\] 
	where $\mu(x) = \int_{1}^{x}P(Y_m>t)dt$. Thus (see \cite{A2000} p. 148)
	\[
	\frac{1}{d_n}\sum_{k=1}^{n}\frac{R_k}{k}\sim\frac{1}{d_n}\sum_{k=1}^{n}\frac{\mu(c_k)}{k}.
	\]
	Observe that
	\[
	\mu(c_k) = \int_{1}^{c_k}P(Y_m>t) dt\leq (c_k-1)<c_k = k\log^b k.
	\]
	Then 
	\[
	0<\frac{1}{d_n}\sum_{k=1}^{n}\frac{\mu(c_k)}{k} <\frac{1}{n^{\gamma}}\sum_{k=1}^{n}\log^bk\sim\frac{1}{n^{\gamma}}\int_{1}^{n}\log^bx {\rm d }{x}\rightarrow 0,
	\]
	which completes the proof.
\end{proof}

\begin{remark}
	Observe that Theorem \ref{genstronglaw} is proven under no assumption on the dependence structure of $R_n$. As already remarked,  finding  more general conditions than independence   under which  the result of Theorem \ref{exactstrind} holds is an open problem. In order to motivate the above result, we notice that Theorem \ref{genstronglaw} is a partial confirmation in this direction, since $\log^b n = o(n^\gamma)$.
\end{remark}

\begin{remark}
	It is important to be pointed out that Theorem \ref{genstronglaw} cannot be considered as an exact law, since the weighted sum involved converges to 0.
\end{remark}

\begin{remark}
	As it has been pointed out to us by the referee, quite often
	there happens to be complete convergence whenever we have almost sure
	convergence. Thus, it would be of interest to check whether the
	exact strong laws obtained in this paper can be generalized to complete
	exact laws similar to the ones studied in \cite{A2000b}.
\end{remark}

\subsection*{Acknowledgements}
R. Giuliano wishes to thank Prof. Tasos Christofides for his kind invitation at the University of Cyprus; the present paper was started during the permanence there.

\end{document}